\numberwithin{equation}{section}
\newcommand{\C}{{\mathbb C}}
\newcommand{\N}{{\mathbb N}}
\newcommand{\Z}{{\mathbb Z}}
\newcommand{\R}{{\mathbb R}}
\newcommand{\Cal}{\mathcal}
\newcommand{\mfrac}[2]{\tfrac{#1\strut}{#2\strut}}
\newcommand{\sminus}{\setminus} 
\newcommand{\hot}{\mathrm{h.o.t.}}
\newcommand{\re}{\operatorname{Re}}
\renewcommand{\Re}{\operatorname{Re}}
\newcommand{\im}{\operatorname{Im}}
\renewcommand{\i}{\mathrm{i}\mkern.25\thinmuskip}
\renewcommand{\d}{\ensuremath{\operatorname{d}\!}}
\newcommand{\e}{\ensuremath{\mkern.25\thinmuskip\mathrm{e}\mkern.4\thinmuskip}}
\newcommand{\res}{\operatorname{res}}
\newcommand{\tdd}[1]{\tfrac{\partial}{\partial #1 \vphantom{\hat X}}}
\newcommand{\model}{{\mathrm{mod}}}
\newcommand{\fmod}{f_\model}
\newcommand{\psimod}{\psi_\model}
\newcommand{\ximod}{\xi_\model}
\newcommand{\orbit}{\mathbb{A}}
\newcommand{\orbitmod}{{\orbit_\model}}
\newcommand{\Aa}{\mathfrak{A}}
\newcommand{\Borel}{\mathscr{B}}
\newcommand{\Laplace}{\mathscr{L}}
\newcommand{\BSum}{\mathscr{S}}
\newcommand{\BV}{\operatorname{BV}}
\newcommand{\Sing}{\operatorname{Sing}}
\newcommand{\Circ}{\operatorname{Circ}}
\newcommand{\VP}{\operatorname{p.\!v.\!}}
\newcommand{\dist}{\operatorname{dist}}
\newcommand{\pls}{{\scalebox{0.7}{$\pmb{+}$}}}
\newcommand{\mns}{{\scalebox{0.7}{$\pmb{-}$}}}
\newcommand{\pms}{{\scalebox{0.7}{$\pmb{\pm}$}}}
\newtheorem{theorem}{Theorem}
\newtheorem{proposition}[theorem]{Proposition}
\newtheorem{thmx}{Theorem}
\newtheorem{cx}{Corollary}
\theoremstyle{definition}
\newtheorem{notation}[theorem]{Notation}
\newtheorem{remark}[theorem]{Remark}
\def\sideremark#1{\ifvmode\leavevmode\fi\vadjust{\vbox to0pt{\vss 
			\hbox to 0pt{\hskip\hsize\hskip1em           
				\vbox{\hsize3.5cm\tiny\raggedright\pretolerance10000
					\noindent #1\hfill}\hss}\vbox to8pt{\vfil}\vss}}}%
\title[Reading analytic invariants of orbits]{Reading analytic invariants\\ of parabolic diffeomorphisms\\ from their orbits}
\author[M. Klime\v{s}]{Martin Klime\v{s}}
\address{University of Zagreb, Faculty of Electrical Engineering and Computing, Unska 3, 10000 Zagreb, Croatia.}
\email{\href{mailto:martin.klimes@fer.hr}{martin.klimes@fer.hr}}
\author[P. Marde\v{s}i\'c]{Pavao Marde\v{s}i\'c}
\address{Universit\'e de Bourgogne, Institut de Mathématiques de Bourgogne, 9 avenue Alain Savary, 21078 Dijon, France.
and University of Zagreb, Faculty of Science, Department of Mathematics, Bijenička cesta 30, 10000 Zagreb, Croatia.
}
\email{\href{mailto:pavao.mardesic@u-bourgogne.fr}{pavao.mardesic@u-bourgogne.fr}}
\author[G. Radunovi\'c]{Goran Radunovi\'c}
\address{University of Zagreb, Faculty of Science, Department of Mathematics, Bijenička cesta 30, 10000 Zagreb, Croatia.}
\email{\href{mailto:goran.radunovic@math.hr}{goran.radunovic@math.hr}}
\author[M. Resman]{Maja Resman}
\address{University of Zagreb, Faculty of Science, Department of Mathematics, Bijenička cesta 30, 10000 Zagreb, Croatia.}
\email{\href{mailto:maja.resman@math.hr}{maja.resman@math.hr}}
\begin{document}

\subjclass[2020]{42A38, 32H50, 37G05}

\keywords{Parabolic diffeomorphisms; Ecalle--Voronin invariants; Fractal analysis; Integral transforms; Zeta functions; Resurgence.}
\thanks{All authors are  supported by the Croatian Science Foundation grant PZS-2019-02-3055 and the bilateral Hubert-Curien Cogito grant 2021-22. Goran Radunovi\' c and Maja Resman are supported also by the Croatian Science Foundation grant no. UIP-2017-05-1020. Pavao Marde\v si\' c was partially supported by  
EIPHI Graduate School (contract ANR-17-EURE-0002). Pavao Marde\v si\' c and Maja Resman also express their gratitude to the Fields Institute for supporting their stay in the scope of the \emph{Thematic Program on Tame Geometry, Transseries and Applications to Analysis and Geometry 2022}.}

\begin{abstract}
In this paper we study germs of diffeomorphisms in the complex plane. We address the following problem: \emph{How to read a diffeomorphism $f$ knowing one of its orbits $\mathbb{A}$?} 

We solve this problem for parabolic germs. 
This is done by associating to the orbit ${\mathbb{A}}$ a function that we call the \emph{dynamic theta function} $\Theta_{\mathbb{A}}$. We prove that the function $\Theta_{\mathbb{A}}$ is $2\pi\i\mathbb{Z}$-resurgent.
We show that one can obtain the sectorial Fatou coordinate as a Laplace-type integral transform of the function $\Theta_{\mathbb{A}}$.
This enables one to read the analytic invariants of a diffeomorphism from the theta function of one of its orbits. 

We also define a closely related \emph{fractal theta function} $\tilde{\Theta}_\orbit$,  
which is inspired by and generalizes  the \emph{geometric zeta function} of a fractal string, and show that it also  encodes the analytic invariants of the diffeomorphism.  
\end{abstract}

\maketitle

{\small \tableofcontents}

\section{Introduction}
Consider a germ of an analytic diffeomorphism of $(\C,0)$ with a parabolic fixed point at the origin
\begin{equation}\label{eq:f}
	f(x)=x+ax^{k+1}+\hot(x), \qquad a\neq 0,\quad k>0.
\end{equation}
By the Leau--Fatou theorem (see \cite{Bracci,IlYa,Loray}), given a disc $D=\{x\in\C,\ |x|<r\}$ of sufficiently small radius $r>0$,  
the set of points \begin{equation}\label{eq:Df}
D_f=\bigcap_{n\in\N}f^{\circ(-n)}(D)
\end{equation}
whose all positive iterates stay in $D$, consists of $k$ connected petals, disjoint except at the origin, centered around the $k$ rays $\{-ax^k\in\R_{\geq 0}\}$. Similarly for $D_{f^{\circ(-1)}}=\bigcap_{n\in\N}f^{\circ n}(D)$, the petals are centered around the rays $\{ax^k\in\R_{\geq 0}\}$. 
In fact, by a theorem of Camacho and Shcherbakov (see \cite{Bracci,IlYa,Loray}), $f$ is topologically conjugated by a quasiconformal tangent-to-identity map to the germ
\begin{equation}\label{eq:fmod}
	\fmod(x)=\frac{x}{(1-akx^k)^{\frac1k}}=\exp\big(ax^{k+1}\tdd{x}\big)(x),
\end{equation}
given by the time-1-flow of the model vector field $\ximod=ax^{k+1}\tdd{x}$.

Let $0\neq x_0\in D_f$ be a point in one of the attractive petals, hence $f^{\circ n}(x_0)\to 0$ as $n\to+\infty$, and let
\begin{equation}\label{eq:orbit}
	\orbit=\{x_n=f^{\circ n}(x_0),\ n\in\N\}\qquad(\text{for us}\ \N:=\Z_{\geq 0})
\end{equation} 
be its \emph{forward orbit} by $f$.
A basic question is \emph{how much information does one such orbit $\orbit$ carry about $f$?}
The answer is trivial: 
Since $f$ is an analytic germ at $0$, whose values on $\orbit$ are determined by the relation $f(x_n)=x_{n+1}$,
and $\orbit$ accumulates at $0$, this means that \emph{$\orbit$ determines $f$ as a germ}. 

Instead, \emph{our question is whether one can somehow ``read'' the analytic invariants of $f$} (i.e. the analytic conjugacy class of $f$) \emph{``directly'' from the discrete set $\orbit$?} 
That is, knowing a set $\orbit$ and knowing that it is an orbit of some (unknown) analytic diffeomorphism, can one determine the analytic invariants of this diffeomorphism.

The orbit $\orbit=\{x_0,x_1,x_2,\ldots\}$ can be considered either as a subset of some neighborhood of the origin in $\C$ or as a sequence of distinct points. 
We shall alternate freely between the two points of view.

In order to read analytic properties of parabolic diffeomorphisms from its orbits, we introduce two theta functions: a \emph{dynamic theta function $\Theta_\orbit$} and a \emph{fractal theta function $\tilde\Theta_\orbit$}. We prove two main theorems: Theorem \ref{thm:1} and Theorem \ref{cor:1} showing the properties of the dynamic theta function $\Theta_\orbit$. In Proposition \ref{prop:tildeTheta}, we show the relationship between the two theta functions, hence transforming Theorem \ref{thm:1} and Theorem \ref{cor:1} to  Corollaries \ref{thm:2} and \ref{cor:2} giving the analogous results for the fractal theta function $\tilde\Theta_\orbit$.

\subsection{Dynamic theta function of an orbit.}
The \emph{dynamic theta function $\Theta_\orbit$ of an orbit $\orbit$} is defined by
\begin{equation}\label{eq:Theta}
	\Theta_\orbit(s):=\sum_{x_i\in\orbit}\,\e^{-s\,t(x_i)}, 
\end{equation}	
where
\begin{equation}\label{eq:psimod}
	 	t(x)=\psimod(x)=-\tfrac{1}{ak}x^{-k}
\end{equation}	
is the Fatou coordinate of the model \eqref{eq:fmod}.
The function $\Theta_\orbit$ is well-defined and analytic for $\re(s)>0$, and we prove in Theorem \ref{thm:1} that it can be indefinitely analytically continued as a multivalued function. The function $\Theta_\orbit$ for  real orbits  is also known as \emph{geometric partition function} of the fractal string $\big\{t(x_i)^{-1}\big\}_{i=1}^{\infty}$ \cite{LapFr2}. The definition of  $\Theta_\orbit(s)$ is also similar to that of a \emph{spectral theta function} of Voros \cite{Voros1,Voros2}, thus the name. 

Denoting $\delta_{x_i}(x)$ the Dirac distribution supported at $x_i$, let  \[\delta_\orbit(x):=\sum_{x_i\in\orbit}\delta_{x_i}(x).\]
Then $\Theta_\orbit(s)$ defined in \eqref{eq:Theta} is a Laplace-type integral transform of $\delta_\orbit(x)$ with weight $t(x)$: 
\begin{equation}\label{eq:Thetadelta}
	\Theta_\orbit(s)=\int_{\gamma}\delta_\orbit(x)\, \e^{-s\,t(x)}\d x,
\end{equation}
where $\gamma\subset\C$ is any curve through the points of $\orbit$ in their order.

In particular, for an orbit $\orbitmod$ of the model $\fmod$, one has
\[\Theta_\orbitmod(s)=\sum_{n=0}^{+\infty}\e^{-s\,(t(x_0)+n)}=\frac{\e^{-s\,t(x_0)}}{1-\e^{-s}},\]
which is a meromorphic function with simple poles at $2\pi\i\Z$.

The idea behind the function $\Theta_\orbit(s)$ is to mimic the method of analysis of singularities of the Borel transform of a formal Fatou coordinate  of $f$.
This was initiated in \'Ecalles' pioneering works on analytic classification of parabolic diffeomorphisms \cite{Ecalle,Ecalle1}, and was a starting point in his development of theory of resurgence \cite{Ecalle2}.

The orbit $\orbit$ is contained in some sector $W_\orbit$ contained in one of the attractive petals of $D_f$ \eqref{eq:Df}.
By the Leau--Fatou theorem (Theorem~\ref{thm:sectorialFatou} in \S\ref{sec:3.1}), on $W_\orbit$ there exists a sectorial Fatou coordinate $\psi_\orbit(x)$, 
i.e. mapping verifying $\psi_\orbit\circ f=\psi_\orbit+1$. It is uniquely
determined by the initial condition $\psi_\orbit(x_0)=0.$
Then $\psi_\orbit(x_m)=m$ for all $m\in\N$. 
We denote $\hat\psi_\orbit(x)$ the formal Fatou coordinate to which $\psi_\orbit$ is asymptotic on $W_\orbit$.

\medskip
The first of our principal results is the following:

 \begin{thmx}\label{thm:1}
The \emph{dynamic theta function} $\Theta_\orbit(s)$ extends analytically to the universal cover of $\C\setminus 2\pi\i\Z$ and has at most exponential growth at infinity along any non-vertical direction.
 	
The unique sectorial Fatou coordinate $\psi_\orbit(x)$ on the petal containing the orbit $\orbit$ such that $\psi_\orbit(x_0)=0$
can be obtained from $\Theta_\orbit(s)$ through the integral transform 
  	\begin{equation}\label{eq:ThetaFatou}
 		\psi_\orbit(x)=\tfrac{1}{2\pi\i}\int_{\Circ(\R_{\leq 0})}\tfrac{\Theta_\orbit(s)}{s}\,\e^{s\,t(x)}\d s=:\widetilde\Laplace\big\{\tfrac{\Theta_\orbit(s)}{2\pi\i\,s}\big\}(x),
 	\end{equation}
 where $\Circ(\R_{\leq 0})$ is the Hankel contour encircling the ray $\R_{\leq 0}$ (Figure~\ref{figure:1}).
\end{thmx}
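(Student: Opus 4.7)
My strategy is to rewrite $\Theta_\orbit(s)$ as a Laplace-type integral built from the sectorial Fatou coordinate, from which both the analytic continuation to the universal cover of $\C\setminus 2\pi\i\Z$ and the exponential growth bound follow by contour deformation; and then to derive the inverse formula \eqref{eq:ThetaFatou} via an Abel-equation plus uniqueness argument.

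First I would parametrize the orbit by the Fatou coordinate. Letting $\varphi:=\psi_\orbit^{-1}$ and $H(z):=\psimod(\varphi(z))$, so that $x_n=\varphi(n)$, we obtain
\[
\Theta_\orbit(s)=\sum_{n=0}^\infty\e^{-sH(n)}.
\]
Here $H$ is analytic on the image $\Omega:=\psi_\orbit(W_\orbit)$, which by the Leau--Fatou theorem contains a sectorial neighbourhood of $[0,\infty)$, and the formal Fatou coordinate gives the asymptotic $H(z)=z+\beta\log z+c+O(1/z)$ as $\re z\to\infty$ ($\beta$ being the resonant invariant of $f$). Applying Abel--Plana summation,
\[
\Theta_\orbit(s)=\tfrac{1}{2}\e^{-sH(0)}+\int_0^\infty\e^{-sH(t)}\,\d t+\i\int_0^\infty\frac{\e^{-sH(\i t)}-\e^{-sH(-\i t)}}{\e^{2\pi t}-1}\,\d t.
\]
The boundary term is entire; the Laplace integral, after the change of variable $u=H(t)$, defines an analytic function on $\C\setminus\R_{\leq 0}$ with at most exponential growth along non-vertical directions. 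The last integral converges absolutely for $|\im s|<2\pi$ because the exponential decay of $(\e^{2\pi t}-1)^{-1}$ dominates the growth $|\e^{-sH(\pm\i t)}|\sim\e^{\pm\im(s)t}$, yielding analyticity on the principal sheet. Analytic continuation across $s=\pm 2\pi\i$ and onto further sheets is then achieved by deforming the rays $[0,\pm\i\infty)$ in the last integral into sectorial rays inside $\Omega$; each time such a deformation sweeps past a pole of $(\e^{2\pi t}-1)^{-1}$ at $t=\i k$, a residue of the form $\pm\e^{-sH(\mp k)}$ is picked up, producing the branching singularity at $s=2\pi\i k$ and accounting for the successive monodromies. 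The exponential growth persists on each sheet by analogous estimates on the rotated contours, using that $H$ is of linear order.

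For the inverse formula \eqref{eq:ThetaFatou}, observe that $\Theta_\orbit(s)/s$ is the ordinary Laplace transform of the orbit counting function $N_\orbit(u):=\#\{n\in\N:\psimod(x_n)\le u\}$, since
\[
\frac{\Theta_\orbit(s)}{s}=\sum_n\int_{\psimod(x_n)}^\infty\e^{-su}\,\d u=\int_0^\infty N_\orbit(u)\,\e^{-su}\,\d u.
\]
Bromwich inversion along a vertical contour would return the step function $N_\orbit(\psimod(x))$; the Hankel contour $\Circ(\R_{\leq 0})$ instead wraps around the singular locus $2\pi\i\Z$ and therefore evaluates to a smooth function $\phi$ of $x$. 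To identify $\phi=\psi_\orbit$, I would verify the two characterizing properties: (i) the Abel equation $\phi\circ f=\phi+1$, using the shift identity $\Theta_\orbit(s)=\e^{-s\psimod(x_0)}+\e^{-s}\Theta_{f(\orbit)}(s)$ together with the fact that substituting $f(x)$ for $x$ produces, under the Hankel kernel $\e^{s\,t(x)}$, a shift that cancels with the prefactor $\e^{-s}$ to leave a $+1$; and (ii) the normalization $\phi(x_0)=0$, by a residue computation at $s=0$ for the Hankel integral. Uniqueness of the sectorial Fatou coordinate on $W_\orbit$ with $\psi_\orbit(x_0)=0$ then forces $\phi=\psi_\orbit$.

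The main obstacle is the contour deformation underlying the analytic continuation: the rays in the Abel--Plana integral must be rotated within the sectorial domain of analyticity of $H$, and each crossing of an imaginary integer produces a cascading residue that must be tracked carefully. This is exactly where the resurgent structure of the formal Fatou coordinate is transferred to $\Theta_\orbit$: the branch points at $2\pi\i k$ encode, via these residues, the obstruction to extending $\varphi$ across the sepals bounding $\Omega$---that is, the Ecalle--Voronin horn maps. Obtaining uniform bounds on $|\e^{-sH(z)}|$ along the rotated contours, needed both for convergence of the representation and for the exponential growth estimate on each sheet, will require a careful description of the sectorial image $\Omega$.
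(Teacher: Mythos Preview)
Your integral representation via Abel--Plana is essentially the same device the paper uses: writing $\delta_\N$ as the boundary value of $B_m(z)=\tfrac{\e^{2\pi\i m z}}{\e^{2\pi\i z}-1}$ gives
\[
\Theta_\orbit(s)=\int_{\Circ(\R_{\geq 0})}B_m(z)\,\e^{-sH(z)}\d z,\qquad H=\psimod\circ\psi_\orbit^{-1},
\]
and your three-term formula is one particular splitting of this Hankel integral with $m=0$. So the overall framework coincides with the paper's. There are however two genuine gaps.

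\textbf{Source of the singularities.} You attribute the branch points at $2\pi\i k$ to residues picked up when the contour in the Abel--Plana integral sweeps past the poles of $(\e^{2\pi t}-1)^{-1}$ at $t=\i k$. But those residues have the form $c\,\e^{-sH(\mp k)}$ with $H(\mp k)\in\C$ a fixed number, hence they are \emph{entire} in $s$; they cannot create branching. The singularities of $\Theta_\orbit$ at $2\pi\i m$ come instead from the exponential growth of the kernel at infinity: one has $B_m(z)\sim -\e^{2\pi\i m z}$ as $\Im z\to+\infty$ and $B_m(z)\sim\e^{2\pi\i(m-1)z}$ as $\Im z\to-\infty$, so the integral over the line $z\in b+\i\e^{-\i\alpha}\R$ converges only on the $\alpha$-slanted strip between $2\pi\i(m-1)$ and $2\pi\i m$. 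Passing to the adjacent strip requires replacing $m$ by $m+1$; the difference $B_m-B_{m+1}=-\e^{2\pi\i m z}$ then produces, after integration, precisely $2\pi\i\,\Borel_\alpha\{\e^{\omega\hat\psi_\orbit}\tfrac{\d\hat\psi_\orbit}{\d t}\}(s)$ with $\omega=2\pi\i m$, which is the jump across the cut at $\omega$. Extension to the full universal cover then rests on \'Ecalle's resurgence theorem for these Borel transforms---an ingredient that is essential and is missing from your outline. Without it you have only the principal sheet.

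\textbf{The Abel-equation argument.} Your shift identity $\Theta_\orbit(s)=\e^{-s\psimod(x_0)}+\e^{-s}\Theta_{f(\orbit)}(s)$ is incorrect: since $t(x_{n})\neq t(x_{n-1})+1$ for the actual orbit (that identity holds only for the model $\fmod$), no factor $\e^{-s}$ appears. The true relation $\Theta_\orbit(s)=\e^{-s t(x_0)}+\Theta_{f(\orbit)}(s)$ is trivial and does not yield the Abel equation for $\phi$. More fundamentally, $t\circ f\neq t+1$, so one cannot relate $\phi(f(x))$ to $\phi(x)$ through the kernel $\e^{s\,t(x)}$ alone. The paper bypasses this by proving directly that
\[
\tfrac{1}{2\pi\i}\BV\Big[\tfrac{\Theta_\orbit(s)}{s}\Big]_{\e^{\i\alpha}\R_{\geq 0}}=\Borel_\alpha\{\hat\psi_\orbit\}(s),
\]
so that the Hankel integral \eqref{eq:ThetaFatou} is nothing but the Borel sum $\BSum_\alpha\{\hat\psi_\orbit\}=\psi_\orbit$.
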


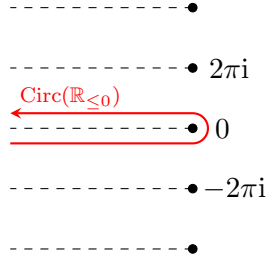
\begin{figure}
\centering
\begin{tikzpicture}[scale=.8]
	\foreach \n in {-2,...,2} {
		\begin{scope}
			\draw[dashed] (0,\n) -- +(180:3);
			\filldraw (0,\n) circle (2pt); 
		\end{scope}
	};	
	\draw[-stealth, thick,red] (-3,-0.25) -- (0,-0.25) arc[start angle=-90, end angle=90,radius=0.25] -- (-3,.25);			
	\draw (0.2,0) node[right]{$0$};		
	\draw (0.1,1) node[right]{$2\pi\i$};
	\draw (0,-1) node[right]{$-2\pi\i$};				
	\draw[red] (-2,0.15) node[above]{$_{\Circ(\R_{\leq 0})}$};
\end{tikzpicture}
\caption{The Hankel contour $\Circ(\R_{\leq 0})$ in the $s$-plane. Dashed lines indicate ramification cuts of the multivalued function $\Theta_\orbit(s)$.}
\label{figure:1}
\end{figure}

The formula \eqref{eq:ThetaFatou} means that one can recover the sectorial Fatou coordinate $\psi_\orbit(x)$, and thus the germ 
\[
f(x)=\psi_\orbit^{\circ(-1)}\circ(\psi_\orbit(x)+1),
\]
	from the knowledge of a single orbit $\orbit$.
	One can summarize the process of reconstruction of $f$ from an orbit $\orbit$ by the following diagram:
\[
\orbit\mapsto \Theta_\orbit\xmapsto{\widetilde\Laplace\{\frac{\cdot}{2\pi\i\,s}\}}\psi_\orbit\mapsto f.
\]

One can interpret \eqref{eq:ThetaFatou} as a discrete version of the usual Borel summation process.
Let $\hat\psi_\orbit(x)$ be the formal Fatou coordinate for $f$, to which is $\psi_\orbit(x)$ asymptotic. Let $\tilde\Psi_\orbit(s)$ be its
formal Borel transform defined term-wise, 
\[\tilde\Psi_\orbit(s)=\tfrac{1}{2\pi\i}\int_{t(x)\in t(x_0)+\R_{\geq 0}}\hat\psi_\orbit(x)\,\e^{-s\,t(x)}\d t(x) =:\widetilde\Borel\{\hat\psi_\orbit\}(s).\]
Then, by the Borel-Laplace summation method,
\begin{equation}\label{eq:orb}\psi_\orbit(x)=\int_{\Circ(\R_{\leq 0})}\tilde\Psi_\orbit(s)\,\e^{s\,t(x)}\d s =:\widetilde\Laplace\{\tilde\Psi_\orbit\}(x)\end{equation}
is a sectorial Fatou coordinate for $f$ on some sector $W_\orbit$ containing the orbit $\orbit$.  
The identity between the integrals \eqref{eq:ThetaFatou} and \eqref{eq:orb} is due to the difference $\frac{\Theta_\orbit(s)}{2\pi\i\,s}-\Tilde\Psi(s)$ being analytic on a neighborhood of the ray $\R_{\leq 0}$.
More details about the relation between the function $\Theta_\orbit(s)$ and the Borel transform of the formal Fatou coordinate in the Borel plane are given in Proposition~\ref{prop:Theta}.

\begin{thmx}\label{cor:1}
The Birkhoff--\'Ecalle--Voronin invariants of the germ $f$ 
can be read from the leading term coefficients at the singularities of the analytic continuation of $\Theta_\orbit(s)$.
\end{thmx}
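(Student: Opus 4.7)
The plan is to reduce the statement to the classical Écalle--Voronin resurgence theorem, using Theorem \ref{thm:1} as a bridge between $\Theta_\orbit$ and the Borel transform of the formal Fatou coordinate. Recall that for a parabolic germ $f$ as in \eqref{eq:f}, the formal Fatou coordinate $\hat\psi_\orbit$ is $1$-Gevrey and its formal Borel transform $\tilde\Psi_\orbit(s)=\widetilde\Borel\{\hat\psi_\orbit\}(s)$ is a $2\pi\i\Z$-resurgent multivalued function on $\C\setminus 2\pi\i\Z$. Écalle's resurgence theorem for parabolic germs asserts that the leading singular parts of $\tilde\Psi_\orbit$ at the lattice points $\omega=2\pi\i m$, $m\in\Z\setminus\{0\}$, encode (up to explicit elementary normalizations) the Fourier coefficients of the logarithms of the Birkhoff--Écalle--Voronin horn maps, and constitute, together with the formal invariants of $f$, a complete system of analytic invariants.

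The first step is to transfer this information to $\Theta_\orbit$. By Theorem \ref{thm:1} and the argument underlying the equality of \eqref{eq:ThetaFatou} and \eqref{eq:orb}, the difference
\[
R(s):=\tfrac{\Theta_\orbit(s)}{2\pi\i\,s}-\tilde\Psi_\orbit(s)
\]
is analytic on a neighborhood of the ray $\R_{\leq 0}$. I would first use this to show that, at each singular point $\omega=2\pi\i m$ reached by continuation through the upper or lower half-plane adjacent to $\R_{\leq 0}$, the quotient $\Theta_\orbit(s)/(2\pi\i s)$ and $\tilde\Psi_\orbit(s)$ share the same principal singular part. Second, invoking the $2\pi\i\Z$-resurgence of $\Theta_\orbit$ already established in Theorem \ref{thm:1}, together with the compatibility of alien derivations with analytic continuation, I would propagate this coincidence to all singularities on every sheet of the universal cover of $\C\setminus 2\pi\i\Z$.

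Having so identified the leading singular coefficients of $\Theta_\orbit$ at $\omega=2\pi\i m$ with those of $\tilde\Psi_\orbit$, one reads the Fourier coefficients of the horn maps $h_j^\pm$ directly off the analytic continuation of $\Theta_\orbit$ via the classical Écalle formulas, and thereby the full system of analytic invariants. The formal invariants of $f$, in turn, can be extracted from $\Theta_\orbit$ in an elementary way, e.g.\ from the local behavior at $s=0$ and from the exponent $k$ entering the weight $t(x)=-\tfrac{1}{ak}x^{-k}$, which is already visible in the definition \eqref{eq:Theta}.

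The hard part will be the propagation step: Theorem \ref{thm:1} only yields a direct comparison of $\Theta_\orbit$ and $\tilde\Psi_\orbit$ on the principal sheet near $\R_{\leq 0}$, whereas matching their singular structures across the entire universal cover requires verifying that the Stokes monodromies of the two functions agree at each $2\pi\i m$. I expect this to be handled by exploiting \eqref{eq:Theta} as a controlled perturbation of the model sum $\Theta_\orbitmod(s)=\e^{-s\,t(x_0)}/(1-\e^{-s})$, whose monodromy is elementary, combined with the recursive structure of alien derivations on the Borel side to inductively pin down the higher-order singular data of $\Theta_\orbit$.
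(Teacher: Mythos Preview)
There is a genuine gap. Your key step---deducing from the analyticity of $R(s)=\tfrac{\Theta_\orbit(s)}{2\pi\i\, s}-\tilde\Psi_\orbit(s)$ on a neighborhood of $\R_{\leq 0}$ that the two functions share the same principal singular part at every $\omega\in 2\pi\i\Z\sminus\{0\}$---fails already in the model case. For $f=\fmod$ one has $\Theta_\orbitmod(s)=\tfrac{\e^{-s\,t(x_0)}}{1-\e^{-s}}$, which has simple poles at every $2\pi\i m$, whereas $\tilde\Psi_\orbitmod(s)=\widetilde\Borel\{t(x)-t(x_0)\}(s)$ is singular only at $s=0$. Thus $R_\model$ is analytic near $\R_{\leq 0}$ but has genuine poles at all $2\pi\i m$, $m\neq 0$; yet the Birkhoff--\'Ecalle--Voronin invariants of $\fmod$ are trivial. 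The analyticity of $R$ near $\R_{\leq 0}$ only matches the two functions at $\omega=0$, and there is no propagation mechanism of the kind you sketch: the monodromies of $\Theta_\orbit$ and of $\tilde\Psi_\orbit$ around $\omega\neq 0$ are genuinely different, so alien derivations will not force them to coincide.

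The paper's argument is different and shorter. It rests on an explicit identification (Proposition~\ref{prop:Theta}, formula~\eqref{eq:BVThetaFatou0}) of the boundary value of $\tfrac{\Theta_\orbit(s)}{2\pi\i\,s}$ across the cut from each $\omega$: at $\omega=0$ it equals $\Borel_\alpha\{\hat\psi_\orbit\}(s)=\Psi(s)$, while at $\omega\neq 0$ it equals $\tfrac{1}{\omega}\Borel_\alpha\{\e^{\omega\hat\psi_\orbit}\}(s)$---a different object whose leading pole is independent of the invariants. Consequently the difference of $\Theta_\orbit$ on two consecutive sheets with respect to the ramification at $s=0$ is exactly $2\pi\i\,\Psi(s)$, and the Birkhoff--\'Ecalle--Voronin coefficients are then read from the singularities of that difference at $2\pi\i\Z\sminus\{0\}$ via the classical computation of \S\ref{sec:3.3}. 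In short, the invariants live not in the principal parts of $\Theta_\orbit$ on the main sheet, but in the \emph{differences} of those principal parts between adjacent sheets; your proposal conflates the two.
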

\begin{proof} 
The analytic class of $f$, given by so called Birkhoff--\'Ecalle--Voronin invariants (see \S\ref{sec:3.1}),
can be read from the function $\tilde\Psi_\orbit(s)$ in terms of the leading term coefficients at some of its singularities (see \S\ref{sec:3.3}). The same then holds for $\Theta_\orbit(s)$, by formula \eqref{eq:BVThetaFatou} in Proposition~\ref{prop:Theta}. 
\end{proof}

\subsection{Fractal analysis: fractal theta function of an orbit.}\label{sub:fraan}\
Another way to analyze the orbit $\orbit$ is by investigating its fractal properties.
In this subsection,  we introduce a function $\tilde\Theta_\orbit$,
which we call \emph{fractal theta function}, as an integral transform of the \emph{counting function $n_\orbit$ of the orbit}, which encodes fractal properties of $\orbit$.
This is similar to the fractal zeta function defined by Lapidus \cite{LRZ, LapFr1} and studied in \cite{MRR}, but with an integral kernel well adapted to reveal intrinsic properties of parabolic dynamics.
The fractal theta function $\tilde\Theta_\orbit$ is related to the dynamic theta function $\Theta_\orbit$ of the orbit $\orbit$ of the previous section.
As a consequence, it is possible to read the analytic invariants of a germ $f$ from fractal properties of its orbit.
\smallskip

In this section, let us assume that the orbit $\orbit$ \eqref{eq:orbit} 
lies in the positive real axis:
\[\orbit=\{x_0>x_1>x_2>\ldots\}\subset\R_{>0}.\] 
This means that 
$f:(\R_{\geq 0},0)\to(\R_{\geq 0},0)$ is real analytic with $a<0$. 
For each $\epsilon>0$, let 
\[\orbit_{\epsilon}=\{x\in\R,\ \dist(x,\orbit)<\epsilon\}\]
be the $\epsilon$-neighborhood of the orbit $\orbit$ in $\R$,
and denote by $\big|\orbit_\epsilon\big|$ its Lebesque measure. 
The \emph{tube function}
\[V_\orbit(\epsilon): \ \epsilon\mapsto \big|\orbit_\epsilon\big|-2\epsilon=\big|\orbit_\epsilon\cap[0,x_0]\big|\]
encodes the fractal properties of the orbit $\orbit$.
In particular, if $V_\orbit(\epsilon)=\mathcal{M}\,\epsilon^{1-D}+o(\epsilon^{1-D})$, then $D$ is the \emph{Minkowski $($box$\,)$ dimension} of $\orbit$, and $\mathcal{M}$ is the \emph{Minkowski content} of $\orbit$.

The tube function $V_\orbit(\epsilon)$ of forward orbits of parabolic analytic diffeomorphisms has been extensively studied by the authors in \cite{MRR,MRRZ1,Resman1,Resman2}.  
In particular, it has been determined \cite{EZZ,MRR} that 
\begin{equation}\label{eq:Minkowski}
	D=\tfrac{k}{k+1},\quad\text{and } \ \mathcal{M}=\tfrac{k+1}{k}\big(\!-\!\tfrac{2}{a}\big)^{\frac1{k+1}}.
\end{equation}
Moreover, it has been shown in \cite{Resman2} that an asymptotic expansion of $V_\orbit(\epsilon)$ as a power-log series exists up to the order $\epsilon^{1+\frac{k}{k+1}}$, and that this finite jet determines the formal class of $f$.
Beyond this order some oscillatory terms appear, and a full asymptotic expansion containing these oscillatory terms has been given in \cite{MRR}.

Instead of looking at the tube function $V_\orbit(\epsilon)$, it is more convenient to look at the closely related \emph{counting function}
\[n_\orbit(\epsilon):=\max\{n\in\N_{>0}:\ |x_{n-1}-x_n|\geq2\epsilon\},\]
counting the maximal number of iterates $n>0$ such that the $\epsilon$-neighborhoods of the points $x_0,\ldots,x_{n}$ are disjoint.
For each $\epsilon>0$, the orbit $\orbit$ is divided into two parts: 
the \emph{tail} $T_\orbit(\epsilon)=\{x_0,\ldots,x_{n_\orbit(\epsilon)-1}\}$,  whose $\epsilon$-neighborhood consist of $n_\orbit(\epsilon)$ disjoint intervals of length $2\epsilon$, and
the \emph{nucleus} $N_\orbit(\epsilon)=\{x_j: j\geq n_\orbit(\epsilon)\}$, whose $\epsilon$-neighborhood spans the interval $]-\epsilon,x_{n_\orbit(\epsilon)}+\epsilon\,[$.
Hence the tube function is expressed in terms of the counting function as
\begin{equation}\label{eq:VA}
	V_\orbit(\epsilon)=2\epsilon\, n_\orbit(\epsilon)+x_{n_\orbit(\epsilon)}.
\end{equation}

Denoting
\begin{equation}\label{eq:eps}\epsilon_n=\tfrac12(x_{n-1}-x_n)=\tfrac12 g(x_{n-1}),\qquad g(x)=x-f(x),\end{equation}
we have:
\[n_\orbit(\epsilon)=\# \{n\in\N_{>0}:\ \epsilon_n\geq\epsilon\}=\sum_{n=1}^{+\infty}\mathbbm{1}_{]0,\epsilon_n]}(\epsilon),\]
where $\mathbbm{1}_{]0,\epsilon_n]}(\epsilon)$ is the characteristic function of the interval $]0,\epsilon_n]$.

\begin{proposition}\label{prop:tubefunction}
Let $\orbit\subset\R_{>0}$. The tube function $V_\orbit(\epsilon)$ is an integral of $2n_\orbit(\epsilon)$:	
	\begin{equation}\label{eq:VnA}
		V_\orbit(\epsilon)=2\int_0^\epsilon n_\orbit(\xi)\d\xi,\qquad \epsilon\geq 0.	
	\end{equation}
\end{proposition}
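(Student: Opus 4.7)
The plan is to show that both sides of \eqref{eq:VnA} equal $2\sum_{n\geq 1}\min(\epsilon,\epsilon_n)$: the left-hand side by a direct geometric measurement of $\orbit_\epsilon\cap[0,x_0]$, the right-hand side by an application of Tonelli's theorem to the series representation of $n_\orbit$ recalled just above the proposition.

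First I would analyze the geometry of $\orbit_\epsilon\cap[0,x_0]$. Because $\{x_n\}$ is strictly decreasing with $x_n\to 0$, any point of $[0,x_0]$ not covered by $\orbit_\epsilon$ must lie strictly between the $\epsilon$-neighborhoods of two consecutive orbit points $x_n$ and $x_{n-1}$; the corresponding open gap $(x_n+\epsilon,\,x_{n-1}-\epsilon)$ is nonempty precisely when $\epsilon_n>\epsilon$, in which case it has length $2(\epsilon_n-\epsilon)$. Crucially, there is no gap near the accumulation point $0$: since $x_n\to 0$, only finitely many $\epsilon_n$ exceed $\epsilon$, and the remaining orbit points together with $0$ are swallowed by one and the same connected component of $\orbit_\epsilon$. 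Combined with the telescoping identity $x_0=2\sum_{n\geq 1}\epsilon_n$, this yields
\[
V_\orbit(\epsilon) = |\orbit_\epsilon\cap[0,x_0]| = x_0 - 2\!\!\sum_{n:\,\epsilon_n>\epsilon}\!(\epsilon_n-\epsilon) = 2\sum_{n\geq 1}\min(\epsilon,\epsilon_n).
\]

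For the right-hand side of \eqref{eq:VnA}, I would plug in the representation $n_\orbit(\xi)=\sum_{n\geq 1}\mathbbm{1}_{]0,\epsilon_n]}(\xi)$ and interchange sum and integral by Tonelli (the summands being nonnegative):
\[
2\int_0^\epsilon n_\orbit(\xi)\,\d\xi = 2\sum_{n\geq 1}\int_0^\epsilon \mathbbm{1}_{]0,\epsilon_n]}(\xi)\,\d\xi = 2\sum_{n\geq 1}\min(\epsilon,\epsilon_n),
\]
which matches the previous display.

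I expect the main obstacle to be the geometric bookkeeping for $V_\orbit$: one must verify both the correct indexing of the gaps and, more subtly, the absence of any gap adjacent to the accumulation point. This is the only place where the specific structure of $\orbit$ enters the argument, through $x_n\to 0$. Note in particular that no monotonicity of the sequence $(\epsilon_n)$ is required; it suffices that $x_{n-1}>x_n>0$ for all $n$.
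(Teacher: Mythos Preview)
Your proof is correct and takes a genuinely different route from the paper's. The paper argues by differentiation: from the explicit formula \eqref{eq:VA}, $V_\orbit(\epsilon)=2\epsilon\,n_\orbit(\epsilon)+x_{n_\orbit(\epsilon)}$, one sees immediately that $V_\orbit$ is continuous and piecewise linear with slope $2n$ on each interval $]\epsilon_{n+1},\epsilon_n[$, hence equals the integral of its derivative $2n_\orbit$. You instead compute both sides as the common closed form $2\sum_{n\geq 1}\min(\epsilon,\epsilon_n)$, obtaining the left side by measuring the gaps of $\orbit_\epsilon$ in $[0,x_0]$ and the right side by Tonelli applied to $n_\orbit=\sum_n\mathbbm{1}_{]0,\epsilon_n]}$.

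The paper's argument is shorter because it piggybacks on the tail/nucleus decomposition already set up in \eqref{eq:VA}, but that decomposition (and the statement that $n_\orbit=n$ precisely on $]\epsilon_{n+1},\epsilon_n[$) implicitly uses the monotonicity of $(\epsilon_n)$, which holds for parabolic orbits but is an extra hypothesis. Your gap-counting argument is self-contained and, as you observe, needs only $x_{n-1}>x_n\to 0$; it therefore proves a slightly more general statement.
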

\begin{proof}
By \eqref{eq:VA}, the function $V_\orbit(\epsilon)$ is continuous, piece-wise linear, with the derivative $\frac{\d}{\d\epsilon}V_\orbit(\epsilon)$ equal to $2n_\orbit(\epsilon)=2n$ on each interval $\epsilon\in\ ]\epsilon_{n+1},\epsilon_n[$,	$n\in\N_{>0}$, and $V_\orbit(0)=0$. 
\end{proof}

We define the \emph{fractal theta function} $\tilde{\Theta}_\orbit(s)$ by
\begin{equation}\label{eq:tildeTheta}
\tilde{\Theta}_\orbit(s)=\sum_{n=1}^{+\infty}\e^{-s\tau(\epsilon_n)},\qquad \re(s)>0,
\end{equation}
where 
\begin{equation}\label{eq:tau}
	\tau(\epsilon)=\tfrac{1}{2k}\big(\!-\!\tfrac{2}{a}\big)^{\frac1{k+1}}\epsilon^{-\frac{k}{k+1}}
\end{equation}
is the leading asymptotic term of $n_\orbit(\epsilon)$ (cf. \eqref{eq:Minkowski} and \eqref{eq:VnA}).
Equivalently, it is equal to the following integral transform of the counting function
\begin{equation*}
		\tilde{\Theta}_\orbit(s)=s\int_{\tau(\epsilon)\in\R_{>0}}\!\!\!\!n_\orbit(\epsilon)\,\e^{-s\,\tau(\epsilon)}\d\tau(\epsilon)=
		\sum_{n=1}^{+\infty}s\int_{\tau(\epsilon)\in\,[\tau(\epsilon_n),+\infty[}\!\!\!\e^{-s\,\tau(\epsilon)}\d\tau(\epsilon).
\end{equation*}

Using Proposition~\ref{prop:tubefunction} and integration by parts, one can also rewrite  $\tilde\Theta_\orbit(s)$ in terms of the tube function $V_\orbit(\epsilon)$:
\begin{equation*}\label{eq:ftf}
\begin{aligned}
\tilde{\Theta}_\orbit(s)&=-\tfrac12 \int_0^{\epsilon_1} \tfrac{\d}{\d\epsilon} V_\orbit(\epsilon)\,\tfrac{\d}{\d\epsilon}\e^{-s\,\tau(\epsilon)}\d\epsilon\\
&=\tfrac12 \int_0^{\epsilon_1} V_\orbit(\epsilon)\,\tfrac{\d^2}{\d\epsilon^2}\e^{-s\,\tau(\epsilon)}\d\epsilon
	\, + \,\tfrac{sx_0}{2}\, \e^{-s\,\tau(\epsilon_1)}{\tau}'(\epsilon_1). 
\end{aligned}	
\end{equation*}
Another way to rewrite it is in terms of the distance function, 
which maps each of the intervals $[x_j,\frac{x_j+x_{j-1}}{2}]$ and $[\frac{x_j+x_{j-1}}{2},x_{j-1}]$ bijectively to
$[0,\epsilon_j]$, $j\in\N_{>0}$, 
\begin{equation}
\tilde{\Theta}_\orbit(s)=-\tfrac{s}{2}\int_0^{x_0} \e^{-s\,\tau(\dist(x,\orbit))} {\tau}'(\dist(x,\orbit))\d x.	
\end{equation}

\begin{remark}
One can define the function $\tilde\Theta_\orbit$ in a similar way also for hyperbolic germs $f(x)=\e^{-\alpha}x+\hot(x)$, $\re(\alpha)>0$, with model
\[f_\model(x)=\e^{-\alpha}x=\exp(-\alpha x\tdd{x}),\qquad \psimod(x)=-\tfrac1\alpha\log x.\] 
Then for $\,\tau(\epsilon)=-\tfrac1\alpha\log\epsilon\,$ one has \
$\tilde\Theta_\orbit(s)=\sum_{n=1}^{+\infty}\epsilon_n^{\frac{s}{\alpha}}
=\zeta_{\Cal E_\orbit}(\tfrac{s}{\alpha})$, 
where the function 
\[\zeta_{\Cal E_\orbit}(s)=\sum_{n=1}^{+\infty}\epsilon_n^{s}
=s\!\int_{\epsilon\in\R_{>0}}\!\!\!\!\! n_\orbit(\epsilon)\,\epsilon^{s-1}\!\d\epsilon
=\tfrac{s}{2}\!\int_0^{x_0}\!\! \dist(x,\orbit)^{s-1}\!\d x,\qquad \re(s)>0,\] 
is known as the \emph{geometric zeta function of the fractal string} $\Cal E_\orbit=\{\epsilon_n: n\in\N_{>0}\}$.
\end{remark}

Geometric zeta functions of fractal strings have been the object of great interest in fractal analysis, see e.g. \cite{LRZ,LapFr1,LapFr2} and can be considered as a special case of fractal theta functions 
with the choice $\tau(\epsilon)=-\log\epsilon$.
Their application to orbits of parabolic diffeomorphisms has been extensively studied in \cite{MRR} where the formal class of the diffeomorphism was then successfully read from the geometric zeta function.
However, in order to read its analytic class, in this paper, we have to adjust the kernel $\tau$ as in \eqref{eq:tau}.

\begin{remark}\label{rem: complex tilda theta}
The formula \eqref{eq:tildeTheta}
with $\epsilon_n\in\mathbb C$ defined by \eqref{eq:eps}, extends the notion of fractal theta function to any  \emph{complex} orbit $\orbit\subset\C$.                                                                         
One can also extend the notion of the counting function $n_\orbit(\epsilon)$ to complex orbits in the following analytic way:
For any embedding of the complex fractal string $\Cal E_\orbit=\{\epsilon_n: n\in\N_{>0}\}$ into a real curve $\Cal C\subset\C^*$, which passes through the points in their order and approaches $0$ with an asymptotic direction (i.e. without too much oscillation), one defines
$n_\orbit:\Cal C\to\N$ by $n_\orbit(\epsilon)=n$ on each of the curve intervals $\Cal C_{]\epsilon_{n+1},\epsilon_n]}$ between $\epsilon_{n}$ and $\epsilon_{n+1}$, and by $0$ on $\Cal C\sminus\bigcup_{n=1}^{+\infty}\Cal C_{]\epsilon_{n+1},\epsilon_n]}$.
Then clearly $n_\orbit(\epsilon)=\sum_{n=1}^{+\infty}\mathbbm {1}_{]\epsilon_{n+1},\epsilon_n]}(\epsilon)$. 
From \eqref{eq:tildeTheta} and by partial integration we thus get:
\begin{equation}\label{eq:tildeThetacomplex}\tilde{\Theta}_\orbit(s)=\int_{\Cal C}\sum_{n=1}^{+\infty}\delta_{\epsilon_n}(\epsilon)\,\e^{-s\tau(\epsilon)}\d\epsilon=
s\int_{\tau(\epsilon)\in\tau(\Cal C)}n_\orbit(\epsilon)\,\e^{-s\tau(\epsilon)}\d\tau(\epsilon),\end{equation}
regardless of the curve $\Cal C$.
\end{remark}

The fractal theta function is in fact the dynamic theta function of a conjugated object:

\begin{proposition}[Relation between the dynamic and the fractal theta function]\label{prop:tildeTheta}
	Let $\phi(x)=x+\hot(x)$ be the analytic germ such that $\phi(x)^{k+1}=\frac{f(x)-x}{a}$,
	and let 
	\[\tilde\orbit=\{\tilde x_n=\phi(x_n),\ n\in\N\}=\phi(\orbit)\] 
	be the image of  the orbit $\orbit=\{x_n,\ n\in\N\}$ by $\phi$. Then $\tilde\orbit$ is an orbit of the analytically conjugated diffeomorphism
	$\tilde f=\phi\circ f\circ\phi^{\circ(-1)},$
	and
\begin{equation*}\label{eq:thetatildetheta}
	\tilde\Theta_\orbit(s)=\Theta_{\tilde\orbit}(s),
\end{equation*}	
where $\Theta_{\tilde\orbit}(s)$ is the dynamic theta function \eqref{eq:Theta} of the orbit $\tilde\orbit$.
\end{proposition}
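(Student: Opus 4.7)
The plan is a short direct computation matching the summands of $\tilde\Theta_\orbit(s)$ and $\Theta_{\tilde\orbit}(s)$ one by one, after verifying that the two sides use a compatible model Fatou coordinate. Since $\phi(x)=x+O(x^2)$ is a local biholomorphism fixing $0$, the map $\tilde f=\phi\circ f\circ\phi^{\circ(-1)}$ is an analytic germ at $0$ and $\tilde\orbit=\phi(\orbit)$ is visibly its forward orbit with $\tilde x_n=\phi(x_n)$. Moreover, using $\phi'(x)=1+O(x)$ and $\phi(x)^{k+1}=x^{k+1}+O(x^{k+2})$, a Taylor expansion gives $\tilde f(\phi(x))-\phi(x)=\phi'(x)\,(f(x)-x)+\hot=a\phi(x)^{k+1}+O(x^{k+2})$, so $\tilde f(y)=y+ay^{k+1}+\hot(y)$. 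Hence the model Fatou coordinate $t(y)=-\tfrac{1}{ak}y^{-k}$ appearing in \eqref{eq:Theta} for $\Theta_{\tilde\orbit}$ has exactly the same form as the one used for $\Theta_\orbit$.

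The crux is the pointwise identity $\tau(\epsilon_n)=t(\tilde x_{n-1})$ for every $n\geq 1$. From the defining relation $\phi(x)^{k+1}=\tfrac{f(x)-x}{a}$ together with \eqref{eq:eps},
\[
\epsilon_n \;=\; \tfrac{1}{2}\bigl(x_{n-1}-f(x_{n-1})\bigr) \;=\; -\tfrac{a}{2}\,\phi(x_{n-1})^{k+1}.
\]
Substituting into \eqref{eq:tau} and collapsing the fractional exponents via $\bigl(-\tfrac{2}{a}\bigr)^{\frac{1}{k+1}}\bigl(-\tfrac{a}{2}\bigr)^{-\frac{k}{k+1}}=\bigl(-\tfrac{2}{a}\bigr)^{\frac{1}{k+1}+\frac{k}{k+1}}=-\tfrac{2}{a}$ yields
\[
\tau(\epsilon_n) \;=\; \tfrac{1}{2k}\Bigl(-\tfrac{2}{a}\Bigr)\,\phi(x_{n-1})^{-k} \;=\; -\tfrac{1}{ak}\,\tilde x_{n-1}^{-k} \;=\; t(\tilde x_{n-1}).
\]
An index shift then concludes the proof:
\[
\tilde\Theta_\orbit(s) \;=\; \sum_{n=1}^{+\infty} \e^{-s\tau(\epsilon_n)} \;=\; \sum_{n=1}^{+\infty} \e^{-s\,t(\tilde x_{n-1})} \;=\; \sum_{m=0}^{+\infty} \e^{-s\,t(\tilde x_m)} \;=\; \Theta_{\tilde\orbit}(s).
\]

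The whole argument is essentially bookkeeping and there is no genuine obstacle; the only mildly delicate point is the consistency of the branches of the fractional powers appearing in \eqref{eq:tau} and of the $(k+1)$-th root implicit in the definition of $\phi$. For a real attracting orbit with $a<0$ all of $-\tfrac{2}{a}$, $-\tfrac{a}{2}$, $\epsilon_n$ and $\phi(x_{n-1})$ are positive reals, so every exponentiation is taken in the principal positive branch and the crucial identity $\bigl(-\tfrac{a}{2}\bigr)^{-k/(k+1)}=\bigl(-\tfrac{2}{a}\bigr)^{k/(k+1)}$ is unambiguous. For a complex orbit in the sense of Remark~\ref{rem: complex tilda theta}, the branches are pinned down by continuity from the real case together with the fact that $\phi$ is a well-defined analytic germ at $0$.
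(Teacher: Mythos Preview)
Your proof is correct and follows essentially the same route as the paper: both establish the pointwise identity $\tau(\epsilon_{n+1})=t(\tilde x_n)$ and conclude by an index shift, the only difference being that the paper factors the computation through the auxiliary map $\tilde g(x)=-ax^{k+1}$ (writing $\tau=t\circ\tilde g^{\circ(-1)}(2\,\cdot\,)$ and $\phi=\tilde g^{\circ(-1)}\circ g$) whereas you unwind the fractional powers directly. Your added check that $\tilde f(y)=y+ay^{k+1}+\hot(y)$, so that the same model coordinate $t$ applies to $\tilde\orbit$, and your remark on branch consistency are welcome clarifications that the paper leaves implicit.
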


\begin{proof}
By comparing \eqref{eq:tau} and \eqref{eq:psimod}, and by definition of $\phi$, we have 
\begin{equation*}\label{eq:phi}
\tau(\epsilon)=t\circ\tilde g^{\circ(-1)}(2\epsilon),\qquad\text{and}\qquad   \phi(x)=\tilde g^{\circ(-1)}\circ g(x), 
\end{equation*}
where $\tilde g=-ax^{k+1}$ and $g(x)=x-f(x)$.
Therefore
\[t(\tilde x_n)=t\circ\phi(x_n)=t\circ \tilde g^{\circ(-1)}\circ g(x_n)=t\circ\tilde g^{\circ(-1)}(2\epsilon_{n+1})=\tau(\epsilon_{n+1}),\]
and	
$\ \tilde\Theta_\orbit(s)=\sum_{n=0}^{+\infty}\e^{-s\,\tau(\epsilon_{n+1})} 
=\sum_{n=0}^{+\infty}\e^{-s\,t(\tilde x_n)}=\Theta_{\tilde\orbit}(s).$
\end{proof}

As a corollary, we have the following analogue of Theorem~\ref{thm:1} for the fractal theta function:

\begin{cx}\label{thm:2}
 	The \emph{fractal theta function} $\tilde\Theta_\orbit(s)$ extends analytically to the universal cover of\  $\C\sminus 2\pi\i\Z$ and has at most exponential growth at infinity along any non-vertical direction.

 The \emph{critical time} function $T_\orbit(\epsilon):=\psi_\orbit\circ g^{\circ(-1)}(2\epsilon)$, that satisfies:
 	\[T_\orbit(\epsilon_n)=n=n_\orbit(\epsilon_n),\qquad n\in\N_{>0},\] 
 	is obtained from $\tilde\Theta_\orbit(s)$ through the integral transform
  	\begin{equation}\label{eq:tildeThetaFatou}
 		T_\orbit(\epsilon)=\tfrac{1}{2\pi\i}\int_{\Circ(\R_{\leq 0})}\tfrac{\tilde\Theta_\orbit(s)}{s}\,\e^{s\,\tau(\epsilon)}\d s=:\widetilde\Laplace\big\{\tfrac{\tilde\Theta_\orbit(s)}{2\pi\i\, s}\big\}(\epsilon),
 	\end{equation}
 	where $\Circ(\R_{\leq 0})$ is the Hankel contour encircling the ray $\R_{\leq 0}$ $($Figure~\ref{figure:1}$)$.
\end{cx}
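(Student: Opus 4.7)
The plan is to reduce Corollary \ref{thm:2} to Theorem \ref{thm:1} via the analytic conjugacy established in Proposition \ref{prop:tildeTheta}. Setting $\tilde\orbit = \phi(\orbit)$, which is an orbit of the analytically conjugate germ $\tilde f = \phi \circ f \circ \phi^{\circ(-1)}$, Proposition \ref{prop:tildeTheta} gives the identity $\tilde\Theta_\orbit(s) = \Theta_{\tilde\orbit}(s)$. The first assertion, namely the analytic continuation of $\tilde\Theta_\orbit$ to the universal cover of $\C \setminus 2\pi\i\Z$ together with the asserted exponential growth along non-vertical directions, follows immediately from applying the first part of Theorem \ref{thm:1} to the dynamic theta function $\Theta_{\tilde\orbit}$.

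For the integral transform formula, I would apply the second part of Theorem \ref{thm:1} to $\tilde\orbit$, which produces the sectorial Fatou coordinate $\psi_{\tilde\orbit}$ of $\tilde f$, normalized by $\psi_{\tilde\orbit}(\tilde x_0) = 0$, as
\[
\psi_{\tilde\orbit}(\tilde x) = \tfrac{1}{2\pi\i}\int_{\Circ(\R_{\leq 0})}\tfrac{\tilde\Theta_\orbit(s)}{s}\,\e^{s\, t(\tilde x)}\,\d s.
\]
Since $\tilde f \circ \phi = \phi \circ f$ and both Fatou coordinates vanish at the base points of their respective orbits, uniqueness of the sectorial Fatou coordinate gives $\psi_{\tilde\orbit} = \psi_\orbit \circ \phi^{\circ(-1)}$. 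From the proof of Proposition \ref{prop:tildeTheta} we have the key identities $\tilde g \circ \phi = g$ and $\tau(\epsilon) = t \circ \tilde g^{\circ(-1)}(2\epsilon)$. Substituting $\tilde x = \tilde g^{\circ(-1)}(2\epsilon)$ in the formula above thus sends $\phi^{\circ(-1)}(\tilde x)$ to $g^{\circ(-1)}(2\epsilon)$ and $t(\tilde x)$ to $\tau(\epsilon)$, which converts the left-hand side into $T_\orbit(\epsilon) = \psi_\orbit \circ g^{\circ(-1)}(2\epsilon)$ and the kernel into the one displayed in \eqref{eq:tildeThetaFatou}. The interpolation property at the sample points $\epsilon = \epsilon_n$ then follows from $\tilde g^{\circ(-1)}(2\epsilon_n) = \phi(x_{n-1})$, whose image under $\psi_{\tilde\orbit}$ agrees, up to the stated indexing convention, with $n_\orbit(\epsilon_n) = n$.

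No significant obstacle arises beyond Theorem \ref{thm:1} itself: once Proposition \ref{prop:tildeTheta} is available, the corollary is essentially a transcription under the geometric change of variable $\epsilon \leftrightarrow \tilde x$ in the target space, and a relabeling of the orbit via $\phi$. The only points requiring care are first, checking that the Hankel contour $\Circ(\R_{\leq 0})$ lives in the Borel variable $s$ and is consequently untouched by the substitution in the $x$-variable, so that the integral transform formula from Theorem \ref{thm:1} transfers verbatim, and second, tracking the base-point normalizations that match $\psi_\orbit(x_0) = 0$ with $\psi_{\tilde\orbit}(\tilde x_0) = 0$.
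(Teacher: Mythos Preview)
Your proposal is correct and follows essentially the same route as the paper: apply Proposition~\ref{prop:tildeTheta} to identify $\tilde\Theta_\orbit=\Theta_{\tilde\orbit}$, invoke Theorem~\ref{thm:1} for $\tilde\orbit$, and then transfer the resulting integral formula for $\psi_{\tilde\orbit}$ to one for $T_\orbit$ via the substitution $\tilde x=\tilde g^{\circ(-1)}(2\epsilon)$ together with $\psi_{\tilde\orbit}=\psi_\orbit\circ\phi^{\circ(-1)}$ and $\tau(\epsilon)=t\circ\tilde g^{\circ(-1)}(2\epsilon)$. Your added remarks on the base-point normalization and the invariance of the Hankel contour under the $x$-variable substitution are accurate and slightly more explicit than the paper's own argument.
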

 
\begin{proof}
By Proposition~\ref{prop:tildeTheta} and Theorem~\ref{thm:1}, the sectorial Fatou coordinate $\psi_{\tilde\orbit}(\tilde x)=\psi_\orbit\circ\phi^{\circ(-1)}(\tilde x)$ for $\tilde f(\tilde x)=\phi\circ f\circ\phi^{\circ(-1)}(\tilde x)$ is given by the integral
\begin{equation*}
 		\psi_{\tilde\orbit}(\tilde x)=\tfrac{1}{2\pi\i}\int_{\Circ(\R_{\leq 0})}\tfrac{\tilde\Theta_\orbit(s)}{s}\,\e^{s\,t(\tilde x)}\,\d s.
 	\end{equation*}
By \eqref{eq:phi} and by definition $T_\orbit(\epsilon)=\psi_\orbit\circ g^{\circ(-1)}(2\epsilon)=\psi_{\tilde\orbit}\circ\tilde g^{\circ(-1)}(2\epsilon)$, while $\tau(\epsilon)=t\circ g^{\circ(-1)}(2\epsilon)$, 	
from which \eqref{eq:tildeThetaFatou} follows.
 \end{proof}

Since $f$ and $\tilde f$ are analytically conjugated, the two functions $\Theta_\orbit$ and $\Theta_{\tilde\orbit}$ carry the same information about the analytic class of $f$. Hence  Theorem~\ref{cor:1} becomes:

\begin{cx}\label{cor:2}
	The Birkhoff--\'Ecalle--Voronin invariants of the germ $f$ can be read from the leading term coefficients at the singularities of the analytic continuation of  $\tilde\Theta_\orbit(s)$.
\end{cx}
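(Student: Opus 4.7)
The plan is to deduce the statement directly from Proposition~\ref{prop:tildeTheta} and Theorem~\ref{cor:1}, using the fact that the Birkhoff--\'Ecalle--Voronin moduli are, by construction, invariants of the analytic conjugacy class of a parabolic germ.

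First I would invoke Proposition~\ref{prop:tildeTheta}, which asserts the pointwise equality
\[
\tilde\Theta_\orbit(s) \;=\; \Theta_{\tilde\orbit}(s)
\]
on the half-plane $\re(s)>0$, where $\tilde\orbit = \phi(\orbit)$ is an orbit of the conjugated germ $\tilde f = \phi\circ f\circ\phi^{\circ(-1)}$. By uniqueness of analytic continuation, this identity persists on the universal cover of $\C\setminus 2\pi\i\Z$ provided by Theorem~\ref{thm:1}. In particular, $\tilde\Theta_\orbit$ and $\Theta_{\tilde\orbit}$ have exactly the same singular locus above $2\pi\i\Z$ and the same leading term coefficients at each of those singularities.

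Next I would apply Theorem~\ref{cor:1} to $\tilde f$ and its orbit $\tilde\orbit$: the BEV invariants of $\tilde f$ can be read from the leading term coefficients of the singularities of $\Theta_{\tilde\orbit}(s)$. Since $\phi$ is an analytic tangent-to-identity germ at $0$, the diffeomorphisms $f$ and $\tilde f$ are analytically conjugate, and hence they carry the same BEV invariants. Composing these observations with the identification of the previous paragraph gives the claim for $f$ and $\tilde\Theta_\orbit$.

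There is no genuine obstacle here; the corollary is essentially Theorem~\ref{cor:1} transported across the analytic conjugation $\phi$ by means of Proposition~\ref{prop:tildeTheta}. The only mild point to check is that the invariance of BEV data under the analytic conjugation $\phi$ is compatible with the passage $\Theta_{\tilde\orbit}\leftrightarrow\tilde\Theta_\orbit$ at the level of singular data, but this follows at once from the literal equality of the two functions together with the fact that the singularity extraction procedure of \S\ref{sec:3.3} is intrinsic to the function in the Borel plane and does not see the parametrization by $\orbit$ versus $\tilde\orbit$.
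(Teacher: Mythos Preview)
Your proposal is correct and follows essentially the same approach as the paper: the paper simply remarks that since $f$ and $\tilde f$ are analytically conjugate, $\Theta_{\tilde\orbit}=\tilde\Theta_\orbit$ carries the same information about the analytic class of $f$, and then invokes Theorem~\ref{cor:1}. Your write-up spells out the same two ingredients (Proposition~\ref{prop:tildeTheta} and Theorem~\ref{cor:1}) with slightly more care about analytic continuation, but the argument is the same.
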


\section{Hyperfunctions, Laplace transforms and Borel summability}

The theory of hyperfunctions in dimension one by  K\"othe \cite{Kothe} and Sato \cite{Sato,Sato1}, and its further development by Komatsu \cite{Komatsu},
provides a good framework for treatment of the distributions that arise as Laplace transforms of analytic functions, and for \'Ecalle's theory of resurgence.
In particular, it allows, to avoid dealing with Dirac distributions through the use of residue theorem.
Let us here recall some basics and fix the notation.

\begin{notation}
	For $c\in\R$ we denote $c\pls=c+\epsilon$ (resp. $c\mns=c-\epsilon$) with an arbitrarily small (infinitesimal) $\epsilon>0$.
\end{notation}

\subsection{Hyperfunctions.}
Let $\Omega\subseteq \e^{\i\alpha}\R$ be a locally closed subset of a line $\e^{\i\alpha}\R$ in the complex plane. Namely, for us $\Omega$ will be one of the following: the whole line $\e^{\i\alpha}\R$, a half-line $\e^{\i\alpha}\R_{\geq 0}$, or a single point $\{\omega\}$.  
We denote
\begin{itemize}[wide=0pt, leftmargin=\parindent]
	\item $\Cal O(\Omega)=\varinjlim_{U\supset\Omega\ \text{open}}\Cal O(U)$ the ring of functions analytic on neighborhoods of $\Omega\subset\C$,
	\item $\Cal O(\C\sminus\Omega,\Omega)=\varinjlim_{U\supset\Omega\ \text{open}}\Cal O(U\sminus\Omega)$ the ring of functions 
	 analytic on deleted neighborhoods of $\Omega$,
	\item $\Cal B(\Omega)=\Cal O(\C\sminus\Omega,\Omega)\big/\Cal O(\Omega)$ the ring of \emph{hyperfunctions with support on $\Omega$}, consisting 
	 of equivalence classes of functions $[\tilde H]_\Omega=\tilde H + \Cal O(\Omega)$.
\end{itemize}
For each hyperfunction $[\tilde H]_\Omega\in \Cal B(\Omega)$ we define its \emph{boundary value} $\BV[\tilde H(s)]_\Omega$, $s\in\Omega$, as the difference 
``$\tilde H(s-\i \e^{\i\alpha}(0\pls))-\tilde H(s+\i \e^{\i\alpha}(0\pls) )$'' understood in a ``distributional sense'' as a shorthand for:
\begin{align*}\label{eq:dai}
\int_\Omega\!\! G(s)\BV[\tilde H(s)]_\Omega\! \d s&=
\int_\Omega\!\! G(s)\tilde H(s\!-\!\i \e^{\i\alpha}(0\pls))\d s- \!\int_\Omega\!\! G(s)\tilde H(s\!+\!\i \e^{\i\alpha}(0\pls))\d s\\
&=\int_{\Circ(\Omega)}\!\!\!\! G(s)\tilde H(s)\d s,\qquad 	\text{for any } \ G\in \Cal O(\Omega),\nonumber
\end{align*}
where $\Circ(\Omega)$ is a contour encircling $\Omega$ in the positive direction. Namely, depending on the set $\Omega$:
\begin{itemize}[wide=0pt, leftmargin=\parindent]
	\item $\Circ(\e^{\i\alpha}\R)=\e^{\i\alpha}(-\i (0\pls)+\R)\cup \e^{\i\alpha}(\i(0\pls)-\R)$ is the difference between a pair of oriented parallel lines,  below minus above of $\e^{\i\alpha}\R$, 
	\item $\Circ(\e^{\i\alpha}\R_{\geq 0})$ is the Hankel contour around the ray $\e^{\i\alpha}\R_{\geq 0}$,
	\item $\Circ(\{\omega\})$ is a small circle around the point $\omega$.
\end{itemize}
Thus we distinguish between an analytic function $\tilde H(s)$ on a complement of $\Omega$, the associated hyperfunction $[\tilde H(s)]_\Omega$, and the distribution $\BV[\tilde H(s)]_\Omega$ on $\Omega$ defined by $[\tilde H(s)]_\Omega$.

\subsection*{Examples.}
\begin{enumerate}[wide=0pt, leftmargin=\parindent]
	\item The \emph{Heaviside function} $\mathbbm{1}_{\e^{\i\alpha}\R_{\geq 0}}(s)	=\begin{cases} 1, & s\in \e^{\i\alpha}\R_{\geq 0},\\ 0,& s\in \e^{\i\alpha}\R_{<0},\end{cases}\ $ 
	can be identified (almost everywhere) with the distribution 
	$\BV[\tfrac{1}{2\pi\i }\log s]_{\e^{\i\alpha}\R_{\geq 0}}$.
	\item The \emph{Dirac distribution} $\delta_\omega(s)$ at a point $\omega\in\Omega$ and its \emph{derivatives} $\delta_\omega^{(m)}(s)$, $m\in\Z_{>0}$:
	\[\delta_\omega(s)=\BV\big[\tfrac{1}{2\pi\i \,(s-\omega)}\big]_\Omega,\qquad 
	\delta_\omega^{(m)}(s)=\big(\tdd{s}\big)^m\delta_\omega(s)=\BV\big[\tfrac{(-1)^{m}m!}{2\pi\i \,(s-\omega)^{m+1}}\big]_\Omega.\]
	\item If $\tilde H(s)=\sum_{n\in\Z}a_n(s\!-\!\omega)^n$ is a convergent Laurent series at $\omega\in\C$, then 
	\[\BV[\tilde H(s)]_{\{\omega\}}=\sum_{n=-1}^{-\infty}a_{n}\BV[(s\!-\!\omega)^{n}]_{\{\omega\}}=2\pi\i \sum_{m=0}^{+\infty}a_{-1-m}\tfrac{(-1)^m}{m!}\delta_\omega^{(m)}(s).\]
	\item Let $\tilde N(s)=\tfrac{1}{2\pi\i}\log(\e^{2\pi\i\,s}-1)$ on $\C\sminus\big(\R_{\leq 0}\cup\R_{\geq 1}\big)$. Then the distribution $\BV[\tilde N(s)]_\R$ can be identified with the \emph{floor function} $\lfloor s\rfloor$ almost everywhere on $\R$.
\end{enumerate}

In the setting of this paper, $\tilde H$ will have at most a discrete set of singularities on $\Omega$, denoted $\Sing[\tilde H]_\Omega$, 
and the distribution $\BV[\tilde H]_\Omega$ will take the form of a sum of a piece-wise analytic function
on $\Omega\sminus\Sing[\tilde H]_\Omega$, 
called the \emph{variation} of $\tilde H$, given by the difference of the values of $\tilde H$ on the two sides of $\Omega$,
and of a sum of higher order Dirac distributions at points of $\Sing[\tilde H]_\Omega$, coming from the ``polar terms'' in $\tilde H$.



\medskip
Similarly, one can also consider \emph{germs of hyperfunctions} at a point $\omega\in\Omega$,
\[\Cal B(\Omega,\,\omega)=\Cal O(\C\sminus\Omega,\,\omega)/\Cal O(\C,\,\omega),\]
where $\Cal O(\C\sminus\Omega,\,\omega)=\varinjlim_{U\ni\omega}\Cal O(U\sminus\Omega)$ consist of germs of functions analytic outside of $\Omega$ on neighborhoods of $\omega$, and
$\Cal O(\C,\,\omega)$ are germs of functions analytic on neighborhoods of $\omega$.
For $\tilde H(s)\in\Cal O(\C\sminus\Omega,\,\omega)$, one then denotes $[\tilde H(s)]_{(\Omega,\, \omega)}\in \Cal B(\Omega,\omega)$ the germ of hyperfunction, and $\BV[\tilde H(s)]_{(\Omega,\, \omega)}$ the associated germ of distribution.\label{p:germofhyperfunction}

\subsection{Borel and Laplace transforms} 

The \emph{two-sided Laplace transform} of a function $H(s)$ on a line $\e^{\i\alpha}\R$,  or of a distribution $H(s)=\BV[\tilde H(s)]_{\e^{\i\alpha}\R}$,  is given by the following integral:%
\footnote{We use the kernel $\e^{st}$ in the Laplace transforms and $\e^{-st}$ in the Borel transforms, which is opposite to the usual convention. The reason is to have in the Borel transforms the same kernel $\e^{-st}$ as in the definition of the dynamic theta function $\Theta_\orbit(s)$.}\label{p:borel}
\begin{equation}\label{eq:Laplace2sided}
	h(t)=\widetilde\Laplace_\alpha\{H\}(t):=\int_{\e^{\i\alpha}\R} H(s)\,\e^{st}\d s=\int_{\Circ(\e^{\i\alpha}\R)}\tilde H(s)\,\e^{st}\d s.
\end{equation} 
If $H(s)$ is a function and if there exist $B<A\in\R$ such that $\sup_{\xi\in\R}|H(\e^{\i\alpha}\xi)(\e^{A\xi}+\e^{B\xi})|<+\infty$, then \eqref{eq:Laplace2sided}
converges absolutely and is analytic on the strip $\{B<\re(\e^{\i\alpha}t)<A\}\subseteq\C$.

The \emph{one-sided Laplace transform} of a function $H(s)$ on a ray $\e^{\i\alpha}\R_{\geq 0}$,  or of a distribution $H(s)=\BV[\tilde H(s)]_{\e^{\i\alpha}\R_{\geq 0}}$, is 
\begin{equation}\label{eq:Laplace1sided}
	h(t)=\Laplace_\alpha\{H\}(t):=\int_{\e^{\i\alpha}\R_{\geq 0}} H(s)\,\e^{st}\d s=\int_{\Circ(\e^{\i\alpha}\R_{\geq 0})}\tilde H(s)\,\e^{st}\d s.
\end{equation} 
If $H(s)$ is a function and $\sup_{\xi\in\R_{\geq 0}}|H(\e^{\i\alpha}\xi)\e^{A\xi}|<+\infty$ for some $A\in\R$ then \eqref{eq:Laplace1sided} converges absolutely and is analytic on the half-plane $\{\re(\e^{\i\alpha}t)<A\}\subseteq\C$.

In both cases \eqref{eq:Laplace2sided}, resp. \eqref{eq:Laplace1sided}, the inverse transform is the \emph{Borel transform}
\begin{equation}\label{eq:Borel2sided}
	H(s)=\Borel_\alpha\{h\}(s)=\tfrac{1}{2\pi\i }\VP\int_{C+\i \e^{-\i \alpha}\R} h(t)\,\e^{-st}\d t,\qquad s\in \e^{\i\alpha}\R,
\end{equation} 
where $\VP\int_{C+\i \e^{-\i \alpha}\R}=\lim_{N\to+\infty}\int_{C-\i \e^{-\i \alpha}N}^{C+\i \e^{\i\alpha}N}$ stands for the ``Cauchy principal value'', and where $C\in\C$ is such that the line $C+\i \e^{-\i \alpha}\R$ is contained in the strip $\{B<\re(\e^{\i\alpha}t)<A\}$, resp. in the half-plane $\{\re(\e^{\i\alpha}t)<A\}$.
If the situation allows, the integration contour $ C+\i \e^{-\i \alpha}\R$  to make the integral absolutely convergent.

In the one-sided case \eqref{eq:Laplace1sided}, if $h(t)$ is analytic on a half-plane  $\{\re(\e^{\i\alpha}t)<A\}$ and has a sub-exponential growth along any ray 
$C+\e^{-\i \beta}\R_{\geq 0}$ pointing strictly inside the half-plane,
then $H(s)=\BV[\tilde H(s)]_{\e^{\i\alpha}\R_{\geq 0}}$ for
\begin{equation*}\label{eq:Borel1sided}
\tilde H(s)=\widetilde{\Borel}_\alpha\{h\}(s)=\tfrac{1}{2\pi\i}\int_{C+\e^{-\i \beta}\R_{\geq 0}}\!\!\! h(t)\,\e^{-st}\d t,\quad
\text{for any }\ \beta\in\
 ]\alpha\!+\!\tfrac{\pi}{2},\alpha\!+\!\tfrac{3\pi}{2}[.
\end{equation*}
Indeed, varying $\beta$ in the interval extends the domain of definition of $\tilde H(s)$ from the half-plane $\{\re(\e^{-\i\beta}s)<0\}$ to $\C\sminus \e^{\i\alpha}\R_{\geq 0}$, and
\begin{align*}
	H(s)&=\tfrac{1}{2\pi\i }\!\int_{C+\i \e^{-\i \alpha}\R_{\geq 0}}\!\!\!\!\! h(t)\,\e^{-st}\d t - \tfrac{1}{2\pi\i }\!\int_{C-\i \e^{-\i \alpha}\R_{\geq 0}}\!\!\!\!\! h(t)\,\e^{-st}\d t =\BV[\tilde H(s)]_{\e^{\i\alpha}\R_{\geq 0}}.
\end{align*}
See \cite{Komatsu} for more details.

\subsection{Borel summation.} 
Let $t(x)=\psimod(x)=-\tfrac{1}{ak}x^{-k}$ be as in \eqref{eq:psimod}.

For a formal series $\hat h(x)=\sum_n h_nx^n$, its
\emph{minor Borel transform  in a direction $\alpha\in\R$} with weight $t(x)$ is defined term-wise as
\begin{equation*}\label{eq:Boreltransform}
	\Borel_\alpha\{\hat h\}(s)=\sum_n h_n\Borel_\alpha\{x^n\}(s),\qquad s\in(\e^{\i\alpha}\R_{\geq 0},0),
\end{equation*}	
where $\Borel_\alpha$ is the Borel transform \eqref{eq:Borel2sided} with $t=t(x)$.
In particular, 
\begin{equation}\label{eq:Borelxn}
	\Borel_\alpha\{x^\nu\}(s)=\begin{cases}  \frac{1}{ak\,\Gamma(\frac{\nu}{k})}\Big(\frac{s}{ak}\Big)^{\frac{\nu}{k}-1},& \tfrac{\nu}{k}\in\C\sminus \Z_{\leq 0},\\[6pt]
	(ak)^{-\frac{\nu}{k}}\delta_0^{(-\frac{\nu}{k})}(s)=\big(ak\tdd{s}\big)^{-\frac{\nu}{k}}\delta_0(s),& \tfrac{\nu}{k}\in \Z_{\leq 0},
\end{cases}
\end{equation}
for $s\in \e^{\i\alpha}\R_{\geq0}$.

\begin{remark}\label{rem:Borel}
The minor Borel transform $\Borel_\alpha\{x^\nu\}(s)$ \eqref{eq:Borelxn} for $\tfrac{\nu}{k}\notin\Z_{\leq 0}$ is standardly considered on the ray $s\in\e^{\i\alpha}\R_{\geq 0}$.
For $s\in\e^{\i\alpha}\R_{\geq 0}$ the integrating contour $C+\i\e^{-\i\alpha}\R$ with $\Re(\e^{\i\alpha}C)<0$ in \eqref{eq:Borel2sided} in the $t(x)$-coordinate may be changed to the negatively oriented Hankel contour around $\e^{-\i\alpha}\R_{\geq 0}$ to make the integral absolutely convergent.
However, if considered on the whole line $\e^{\i\alpha}\R$, then for $s\in\e^{\i\alpha}\R_{<0}$ one has $\Borel_\alpha\{x^\nu\}(s)=0$.
Indeed, the integrating contour in the case $s\in\e^{\i\alpha}\R_{< 0}$ may be changed to the positively oriented Hankel contour around $C+\e^{-\i\alpha}\R_{\leq 0}$, for which the integral, by the residue theorem, is equal to $0$. 
\end{remark}

Similarly, the \emph{major Borel transform} of a formal series $\hat h$ is defined term-wise as:
\[\widetilde\Borel_\alpha\{\hat h\}(s)=\sum_n h_n\widetilde\Borel_\alpha\{x^n\}(s),\] 
where we define
\begin{equation}\label{eq:tildeBorelxn}
		\widetilde\Borel_\alpha\{x^\nu\}(s):=\begin{cases}
			\tfrac{1}{2\pi\i\,ak\,\Gamma(\frac{\nu}{k})} \Big(\tfrac{s}{ak}\Big)^{\frac{\nu}{k}-1}\log s,& \tfrac{\nu}{k}\in\Z_{>0}, \\[6pt]	
\tfrac{-\Gamma(1-\frac{\nu}{k})}{2\pi\i\,ak} \Big(\tfrac{-s}{ak}\Big)^{\frac{\nu}{k}-1},& \tfrac{\nu}{k}\in\C\sminus\Z_{>0}. 
		\end{cases}
\end{equation}

Then, by \eqref{eq:Borelxn} and \eqref{eq:tildeBorelxn},  $\Borel_\alpha\{x^\nu\}(s)=\BV\big[\widetilde\Borel_\alpha\{x^\nu\}(s)\big]_{(\e^{\i\alpha}\R_{\geq0},\,0)}$ for all $\nu\in\C$.
Hence 
$\Borel_\alpha\{\hat h\}(s)=\BV\big[\widetilde\Borel_\alpha\{\hat h\}(s)\big]_{(\e^{\i\alpha}\R_{\geq0},\,0)}$ if either of the two series converges.

\smallskip
The \emph{Borel sum in a direction $\alpha\in\R$} with weight $t(x)$ of a formal series $\hat h(x)$ is defined by:
\begin{align*}
	\BSum_\alpha\{\hat h\}(x)&:= \Laplace_\alpha\{H(s)\}(x)=\int_{\e^{\i\alpha} \R_{\geq 0}}H(s)\,\e^{s\,t(x)}\d s ,& H(s)=\Borel_\alpha\{\hat h(x)\}(s),\\
	&:=\widetilde\Laplace_\alpha\{\tilde H(s)\}(x)=\int_{\Circ(\e^{\i\alpha}\R_{\geq0})\vphantom{\big |}}\!\!\!\!\!\!\!\! \tilde H(s)\,\e^{s\,t(x)}\d s ,& \tilde H(s)=\widetilde\Borel_\alpha\{\hat h(x)\}(s),
\end{align*}
assuming that $H(s)$ is defined on an initial segment of $\e^{\i\alpha}\R_{\geq 0}$, and is analytically extendable to the whole ray $\e^{\i\alpha}\R_{\geq 0}$
with at most exponential growth there, i.e. that $|H(s)|\leq C \e^{-A_\alpha|s|}$ for some $A_\alpha\in\R$, in which case $\BSum_\alpha\{\hat h\}(x)$ is defined on  $\{\re(\e^{\i\alpha}t(x))<A_\alpha\}$.

\begin{remark}\label{rem:log}
Let us calculate $\Borel_\alpha\{\log t(x)\}(s)$.
\[\widetilde\Borel_\alpha\{\log t(x)\}(s)=\tfrac{1}{2\pi\i}\int_{1-\e^{-\i\alpha}\R_{\geq 0}}\!\!\!\!\log t\,\e^{-st}\d t=\tfrac{1}{2\pi\i\,s}\int_1^{-\e^{\i\alpha}\infty}\!\!\! t^{-1}\e^{-st}\d t=\tfrac{E_1(s)}{2\pi\i\, s},\]
where $E_1(s)=\int_1^{+\infty}t^{-1}\e^{-st}\d t=-\gamma-\log s-\sum_{l=1}^{+\infty}\frac{(-s)^l}{l\cdot l!}$ is the exponential integral and $\gamma=0.5772...$ the Euler constant.
Since $\tfrac{-1}{2\pi\i\,s}\sum_{l=1}^{+\infty}\frac{(-s)^l}{l\cdot l!}$ is an entire function, 
\[\Borel_\alpha\{\log t(x)\}(s)=\BV\big[\widetilde\Borel_\alpha\{\log t(x)\}(s)\big]_{\e^{\i\alpha}\R_{\geq 0}}=-\gamma\,\delta_0(s)-\tfrac{1}{s}.\]
\end{remark}

\section{Fatou coordinates and analytic invariants of parabolic germs}

Here we give a quick overview of some of the basics of the Birkhoff--\'Ecalle--Voronin theory of analytic classification of germs of diffeomorphisms of $(\C,0)$ tangent to the identity,
discovered independently by Birkhoff \cite{Birkhoff}, \'Ecalle \cite{EcalleBouillot,Ecalle1,Ecalle,Ecalle2} and Voronin \cite{Voronin}.
There are many excellent references for this theory, such as \cite{ Bracci, IlYa, Loray}.

\subsection{Formal and sectorial Fatou coordinates.}\label{sec:3.1}

Let $f$ be a germ \eqref{eq:f} and $\fmod=\exp(\ximod)$  be its model \eqref{eq:psimod}. 
The germ $f$, being tangent to the identity, possesses a unique formal infinitesimal generator: a formal vector field $\hat \xi(x)=\big(ax^{k+1}+\hot(x)\big)\tdd{x}$,  such that the Taylor series $\hat f(x)$ for $f(x)$ is equal to the formal time-$1$ flow of $\hat\xi$: 
\[\hat f(x)=\exp(\hat\xi)(x)=\sum_{n=0}^{+\infty}\tfrac{1}{n!}\hat\xi^{n}.x \,,\]
(here $\hat\xi.\hat h$ denotes the formal Lie derivative associated to $\hat\xi$ applied on some formal power series $\hat h$).

The formal antiderivative $\hat\psi(x)=\int\hat \xi^{-1}$ of the dual formal meromorphic 1-form $\hat\xi^{-1}$
defines a \emph{formal Fatou coordinate} for $f$ of the form 
\begin{equation}\label{eq:formalFatou}
	\hat\psi(x)=r_{-k}x^{-k}+\ldots+r_{-1}x^{-1}+\rho\log x+C+\sum_{j=1}^{+\infty}r_jx^j,\qquad C\in\mathbb C,
\end{equation}
where $\rho$ is the \emph{residual invariant} of $f$ given by	
\begin{equation}\label{eq:resit}
		\rho=\res_{x=0}\hat\xi^{-1}=\res_{x=0}\left(\mfrac{\d x}{f(x)-x}\right)+\mfrac{k+1}{2},
\end{equation}
and the coefficients $r_m$ are uniquely determined by $f$.
It satisfies the \emph{Abel equation}
\begin{equation*}\label{eq:Fatourelation}
	\hat\psi\circ f=\hat\psi+1,
\end{equation*} 
where $f$ is identified with its Taylor series at $0$.

A \emph{sectorial Fatou coordinate} for $f$ is an analytic function $\psi_W$ on some sector $W$ that satisfies the \emph{Abel equation} $\psi_W\circ f=\psi_W+1$ and
such that $\psi_W(x)-\big(r_{-k}x^{-k}+\ldots+r_{-1}x^{-1}+\rho\log x\big)$ is bounded on $W$ (equivalently, one could ask $\psi_W:W\to \mathbb C$ to be injective).

One can always prenormalize $f$ through an analytic change of coordinate so that it takes the form $f(x)=x+ax^{k+1}+a^2(\tfrac{k+1}{2}-\rho)x^{2k+1}+\hot(x)$,
in which case the formal Fatou coordinate takes the form
\begin{equation}\label{eq:formalFatou1}
 \hat\psi(x)=t(x)-\tfrac{\rho}{k}\log t(x)+C'+\hat r(x), \quad
 \text{with }\ \hat r(x)=\sum_{n=1}^{+\infty}r_nx^n\quad \text{and }\ C'\in\mathbb C. 
\end{equation}
Assuming $f$ to be in a prenormalized form is not essential, but it is convenient since it simplifies the discussion.
The formal series $\hat r(x)$ is in general divergent, but always Borel summable of order $k$.

\begin{theorem}[Leau\,\cite{Leau}, Fatou\,\cite{Fatou}, Birkhoff\,\cite{Birkhoff}, Kimura\,\cite{Kimura}, \'Ecalle\,\cite{Ecalle}, Voronin\,\cite{Voronin}] \label{thm:sectorialFatou}
The germ $f(x)$ \eqref{eq:f} possesses a  \emph{``cochain'' of sectorial Fatou coordinates} $\psi_{W_j}(x)$ given by the Borel sums
\begin{equation}\label{eq:psialpha}
	\psi_{W_j}(x):=\BSum_{\alpha_j}\{\hat\psi\}(x)=t(x)-\tfrac{\rho}{k}\log t(x)+C'+\BSum_{\alpha_j}\{\hat r\}(x),  
\end{equation}
$\alpha_j\in\  ](j-\tfrac12)\pi,(j+\tfrac12)\pi[$, on a covering by $2k$ sectors, called \emph{Leau--Fatou petals},
\[W_{j}\subset\big\{x:\ \arg(t(x))\in\ ](j-1)\pi,(j+1)\pi[\,\big\},\qquad j=1,\ldots,2k,\]
see Figure~\ref{figure:parabolic}. 
\end{theorem}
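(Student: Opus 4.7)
The plan is first to construct sectorial Fatou coordinates directly on each of the $2k$ petals by solving the Abel equation $\psi\circ f=\psi+1$ as a functional equation, and then to identify these solutions with the Borel sums of $\hat\psi$.

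I would first pass to the $t$-coordinate $t=\psimod(x)=-\tfrac{1}{ak}x^{-k}$. Each of the $k$ attracting (resp.\ repelling) petals of $f$ lifts via $t$ to a half-plane $\{\re t>R\}$ (resp.\ $\{\re t<-R\}$) on one of the $k$ sheets of the universal cover of $\C^*$; this yields the $2k$ sectors $W_j$. After prenormalization, $f$ conjugates on each such half-plane to a germ at infinity of the form $F(t)=t+1+\tfrac{\rho}{k\,t}+O(t^{-2})$. Inserting the ansatz $\Psi(t)=t-\tfrac{\rho}{k}\log t+C'+\tilde r(t)$, $\tilde r(t)\to 0$, the Abel equation reduces to
\begin{equation*}
\tilde r(F(t))-\tilde r(t)=G(t),\qquad G(t)=O(t^{-2}).
\end{equation*}
On an attracting petal the telescoping series $\tilde r(t)=-\sum_{n\geq 0}G(F^{\circ n}(t))$ converges uniformly (since $F^{\circ n}(t)\sim t+n$, hence $G(F^{\circ n}(t))=O(n^{-2})$) and defines a holomorphic solution vanishing at infinity; repelling petals are treated symmetrically by iterating $F^{\circ(-1)}$. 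Iterating the asymptotic of $G$ along the orbit gives the term-by-term asymptotic $\tilde r(t)\sim\hat{\tilde r}(t)$, where $\hat{\tilde r}$ is the image of $\hat r$ in the $t$-coordinate.

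To upgrade this asymptotic to a Borel-summability statement, I would show by induction from the Abel equation that the coefficients satisfy $|\tilde r_n|\leq K\,A^n\,n!$, so that the formal Borel transform $\widetilde\Borel\{\hat{\tilde r}\}(s)$ converges near $s=0$. Pulling the Abel equation into the Borel plane converts it into a convolution-type equation whose structure constrains the singular set of $\widetilde\Borel\{\hat{\tilde r}\}$ to lie in $2\pi\i\Z$; hence $\widetilde\Borel\{\hat{\tilde r}\}$ extends analytically along every ray $\e^{\i\alpha}\R_{\geq 0}$ with $\alpha\in\,](j-\tfrac12)\pi,(j+\tfrac12)\pi[$, with at most exponential growth there. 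The associated Laplace integral defines $\BSum_{\alpha_j}\{\hat r\}$ on a subsector of $W_j$, and by uniqueness of bounded Abel-equation solutions vanishing at infinity it must coincide with the $\tilde r$ built above. Restoring the $t(x)$ and $-\tfrac{\rho}{k}\log t(x)$ terms yields the formula \eqref{eq:psialpha} on all of $W_j$.

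The main obstacle is the Gevrey-$1$ estimate in $t$ (equivalently, $k$-summability in $x$), which is the genuine analytic content of the statement. The telescoping argument yields existence of $\tilde r$ and its term-by-term asymptotic rather cheaply, but extracting the factorial bound $|\tilde r_n|\leq K\,A^n\,n!$ — and, dually, establishing the analytic continuation and the exponential bound of the Borel transform along non-vertical rays — is the technical heart of the proof; it is also what forces the geometric picture of exactly $2k$ petals described above, since the forbidden directions $\e^{\pm\i\pi/2}\R_{\geq 0}$ in the Borel plane are precisely the boundaries of the intervals for $\alpha_j$.
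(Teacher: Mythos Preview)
Your sketch is along correct lines and the two-step strategy --- first construct a sectorial solution of the Abel equation on each petal by the classical Leau--Fatou--Kimura telescoping argument, then identify it with the Borel sum of $\hat r$ via uniqueness of bounded solutions vanishing at infinity --- is standard and works. You are also right that the substantive content is the Gevrey-$1$ bound, equivalently the analytic continuation and exponential growth of $\Borel_\alpha\{\hat r\}(s)$ along non-vertical rays.

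That said, the paper does not give its own proof of this theorem. It is stated as a classical result, attributed in the heading to Leau, Fatou, Birkhoff, Kimura, \'Ecalle and Voronin, and the only argument offered is the single sentence ``Theorem~\ref{thm:sectorialFatou} is a consequence of'' \'Ecalle's resurgence theorem (Theorem~\ref{thm:Ecalle}), which asserts exactly the analytic continuation to $\widetilde{\C\sminus 2\pi\i\Z}$ and the exponential growth of $\Borel_\alpha\{\hat r\}$ that you flag as the ``main obstacle''. So where you outline a route towards those estimates (factorial bounds by induction, a convolution equation in the Borel plane), the paper black-boxes that step by citing \cite{Ecalle2,Ecalle3}, with \cite{Malg-resurgence} and \cite{DudkoSauzin1,DudkoSauzin2} as modern references. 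Your approach is therefore more self-contained in intent --- it rederives the existence of the sectorial coordinate before appealing to summability --- while the paper's is a one-line deduction from a deeper cited result; in particular the paper never invokes the telescoping construction or the uniqueness argument that you rely on.
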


The map $x\mapsto t(x)$ is a $k$-sheeted branched covering of a neighborhood of $t=\infty$.
Therefore the formal series $\hat r(x)$ in \eqref{eq:formalFatou1} becomes $k$-times ramified as a series in $t$.
The change of sheet $t\mapsto \e^{-2\pi\i}t$ on a neighborhood of $t=\infty$ corresponds to the opposite change of sheet $s\mapsto \e^{2\pi\i}s$ on a neighborhood of $s=0$ in the Borel plane:
\[\Borel_\alpha\{\hat r(x)\}(s)=\Borel_{\alpha+2\pi}\{\hat r(\e^{\frac{2\pi\i}{k}}x)\}(\e^{2\pi\i}s).\]
In particular, this implies that
\[\BSum_{\alpha_j+2k\pi}\{\hat r\}(\e^{2\pi\i }x)=\BSum_{\alpha_j}\{\hat r\}(x),\quad \BSum_{\alpha_j+2k\pi}\{\hat\psi\}(\e^{2\pi\i }x)=\BSum_{\alpha_j}\{\hat \psi\}(x)+2\pi\i \rho.\]
This explains why $2k$ Borel sums \eqref{eq:psialpha} are needed.

\medskip

Theorem~\ref{thm:sectorialFatou} is a consequence of:

\begin{theorem}[\'Ecalle \cite{Ecalle2, Ecalle3}]\label{thm:Ecalle}
The minor Borel transform $\Borel_\alpha\{\hat r\}(s)$ of the formal series $\hat r(x)$ in \eqref{eq:formalFatou1}
extends analytically from the ray $\e^{\i\alpha}\R_{\geq 0}$ to the universal cover $\widetilde{\C\sminus 2\pi\i\Z}$, and has at most exponential growth when $s\to \infty$ along any non-vertical direction on any sheet.

Furthermore, if  $\rho=0$, then $\Borel_\alpha\{\hat r\}(s)$ is \emph{simple resurgent}, meaning that all its singularities $\omega\in2\pi\i\Z$, when accessed from any sheet, have the form 
\[(s\!-\!\omega)^{-1}\C\{(s\!-\!\omega)^{\frac{1}{k}}\}
+\log(s\!-\!\omega)\,\C\{s\!-\!\omega\}.\]
\end{theorem}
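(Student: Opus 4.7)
The plan is to derive from the Abel equation $\hat\psi\circ f=\hat\psi+1$ a convolution-type equation in the Borel plane for $R(s):=\Borel_\alpha\{\hat r\}(s)$, and then analyze this equation to obtain the asserted analytic continuation, exponential growth, and singularity structure. This is \'Ecalle's classical strategy, and I would follow its skeleton.

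First, substituting the ansatz $\hat\psi(x)=t(x)-\tfrac{\rho}{k}\log t(x)+C'+\hat r(x)$ from \eqref{eq:formalFatou1} into the Abel equation yields
\begin{equation*}
\hat r(f(x))-\hat r(x)=\hat g(x),\qquad \hat g(x):=1+t(x)-t(f(x))+\tfrac{\rho}{k}\bigl(\log t(f(x))-\log t(x)\bigr),
\end{equation*}
where $\hat g$ is a convergent analytic germ at $0$ (vanishing to high order thanks to the prenormalization of $f$). Expressing everything in the coordinate $t=t(x)$, composition with $f$ becomes a perturbation $t\mapsto t+1+\eta(t)$ of the unit shift, with $\eta(t)$ a convergent series in $t^{-1/k}$ starting at order $t^{-1-1/k}$.

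Next, applying the minor Borel transform $\Borel_\alpha$ in $t$ (kernel $\e^{-st}$), the pure shift $t\mapsto t+1$ becomes multiplication by $\e^{-s}$, while the tail $\eta(t)$ produces, via the formal identity $\e^{\eta(t)\partial_t}=\sum_n \tfrac{\eta^n}{n!}\partial_t^n$, an infinite cascade of convolution terms. One obtains an equation of the schematic form
\begin{equation*}
(\e^{-s}-1)\,R(s)=G(s)+\Cal K[R](s),
\end{equation*}
where $G=\Borel_\alpha\{\hat g\}$ is entire of exponential type (as the Borel transform of a convergent series), and $\Cal K$ is a convolution operator whose kernels are analytic on $\C\sminus 2\pi\i\Z$ and regularizing. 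The singular locus of $R$ is then forced into the zero set of $\e^{-s}-1$, namely $2\pi\i\Z$. I would extend $R$ sheet by sheet along paths on $\widetilde{\C\sminus 2\pi\i\Z}$ by a Picard iteration on this equation: on each sheet $(\e^{-s}-1)^{-1}$ is meromorphic and $\Cal K$ smooths, so iteration converges on compact subsets. Uniform exponential-type bounds on $G$ and on iterates of $\Cal K$ along non-vertical rays, where $|\e^{-s}-1|^{-1}$ grows only polynomially, then yield the estimate $|R(s)|\leq C_\alpha \e^{A_\alpha|s|}$.

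For the simple-resurgent case $\rho=0$, the logarithmic correction in $\hat\psi$ vanishes, and the only ramification in the Borel plane arises from the $k$-th root inside $\eta(t)$. Locally near $\omega=2\pi\i n$, writing $R=R_\omega^{\mathrm{reg}}+R_\omega^{\mathrm{sing}}$ and solving the functional equation for the singular part yields: a simple pole $(s-\omega)^{-1}$ from the simple zero of $\e^{-s}-1$, a $k$-ramified convergent tail in $(s-\omega)^{-1}\C\{(s-\omega)^{1/k}\}$ from the $k$-th root structure of $\eta$, and a $\log(s-\omega)\C\{s-\omega\}$ contribution from the Cauchy-type circulation of $R$ around $\omega$ produced by each iteration of $\Cal K$. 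The main obstacle is controlling this infinite convolution cascade uniformly on every sheet of the universal cover: each iterate of $\Cal K$ may encircle additional points of $2\pi\i\Z$, and the joint estimate requires \'Ecalle's resurgent convolution framework (or mould calculus) as developed in \cite{Ecalle2, Ecalle3, EcalleBouillot}. A workable alternative is to transfer resurgence from a formal Birkhoff conjugation $\hat h\circ f=\fmod\circ\hat h$, where $\hat h$ satisfies a nonlinear ODE with irregular singularity of Poincar\'e rank $k$ at $x=0$ for which classical ODE-resurgence theorems apply directly.
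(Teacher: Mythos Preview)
The paper does not supply its own proof of this theorem: it is stated as a known result of \'Ecalle and immediately followed by the sentence ``See \cite[Section 9]{Ecalle2}, \cite{Malg-resurgence}, and the papers \cite{DudkoSauzin1, DudkoSauzin2} of Dudko \& Sauzin with a new short proof.'' There is therefore no in-paper argument to compare your proposal against.

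That said, your sketch is a faithful outline of the classical route taken in the cited references. The derivation of the difference equation $\hat r\circ f-\hat r=\hat g$ from the Abel equation, the passage to a convolution equation of the shape $(\e^{-s}-1)R=G+\Cal K[R]$ in the Borel plane, the identification of the singular set as the zero locus of $\e^{-s}-1$, and the sheet-by-sheet continuation by iteration are exactly the skeleton of \'Ecalle's argument and of the streamlined version in \cite{DudkoSauzin1,DudkoSauzin2}. You are also right to flag the genuine difficulty: the uniform control of the infinite convolution cascade on every sheet of $\widetilde{\C\sminus 2\pi\i\Z}$ is where all the work lies, and your sketch stops short of it (as does any honest one-page summary). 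The alternative you mention at the end, transferring resurgence from a conjugation to $\fmod$, is closer in spirit to the Dudko--Sauzin approach than to \'Ecalle's original mould calculus, and is indeed the shorter path if one is willing to quote the nonlinear resurgence machinery from \cite{MitschiSauzin}.
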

See \cite[Section 9]{Ecalle2}, \cite{Malg-resurgence}, and the papers \cite{DudkoSauzin1, DudkoSauzin2} of Dudko \& Sauzin with a new short proof.

\begin{remark}\label{rem:continuation}
The minor Borel transform of the formal Fatou coordinate \eqref{eq:formalFatou1}
\begin{equation}\label{eq:BorelFatou}
\Psi(s)=\Borel_\alpha\{\hat\psi\}(s)=\delta_0^{(1)}(s)+\tfrac{\rho}{k}s^{-1}+ (C'+\tfrac{\rho\gamma}{k})\,\delta_0(s)+\Borel_\alpha\{\hat r\}(s).
\end{equation}
is a distribution, defined by $\BV[\tilde\Psi(s)]_{\e^{\i\alpha}\R_{\geq 0}}$, where
\[\tilde\Psi(s):=\widetilde\Borel_\alpha\{\hat\psi\}(s)=
-\tfrac{1}{2\pi\i \,s^2}-\tfrac{\rho}{2\pi\i k\, s}E_1(s)+ \tfrac{C'}{2\pi\i \,s} +\widetilde\Borel_\alpha\{\hat r\}(s),\]
see Remark~\ref{rem:log}.
Since $\Psi(s)$ is a sum of a Dirac part $\delta_0^{(1)}(s)+(C'+\tfrac{\rho\gamma}{k})\,\delta_0(s)$ with an analytic function $\tfrac{\rho}{k}s^{-1}+ \Borel_\alpha\{\hat r\}(s)$, one can as well talk about the analytic continuation of $\Psi(s)$. The Dirac part stays unchanged under continuation: indeed $\delta_0^{(1)}(s)+(C'+\tfrac{\rho\gamma}{k})\,\delta_0(s)=\BV\big[\tfrac{-1}{2\pi\i\,s^2}+\tfrac{C'+\frac{\rho\gamma}{k}}{2\pi\i\,s}]_{\e^{\i\alpha}\R_{\geq 0}}$, and the function $\tfrac{-1}{2\pi\i,s^2}+\tfrac{C'+\frac{\rho\gamma}{k}}{2\pi\i\,s}$ does not change under continuation, while any deformation of the cut $\e^{\i\alpha}\R_{\geq 0}$ is end-point homotopic to itself. 
On the other hand, let us note that the continuation of the term $\tfrac{\rho}{k}s^{-1}=\BV\big[\tfrac{\rho}{2\pi\i k\,s}\log s\big]_{\e^{\i\alpha}\R_{\geq 0}}$ along a simple loop around $0$ changes it by the addition of $2\pi\i \tfrac{\rho}{k}\delta_0(s)$.
\end{remark}

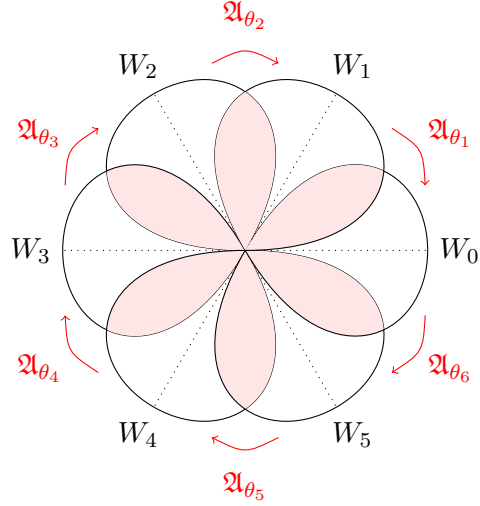
\begin{figure}[t]
\centering	
\begin{tikzpicture}[scale=0.6]
\foreach \n  in {1,...,6} {
\pgfmathtruncatemacro\m{\n-1};
\begin{scope}[rotate=\n*60-30, even odd rule] 
\draw[dotted] (0,0) -- (0,4); \draw (-30:4.7) node{$W_{\m}$};	\draw[->, red] (10:4.2) .. controls (0:4.5) .. (-10:4.2); 
\draw[red] (0:5.2) node{$\Aa_{\theta_{\n}}$};	
\draw[clip] (0,0) .. controls (3,{sqrt(3)}) and (2,4) .. (0,4) .. controls  (-2,4) and (-3,{sqrt(3)}) .. (0,0); 
\fill[red!10, rotate=60]  (0,0) .. controls (3,{sqrt(3)}) and (2,4) .. (0,4) .. controls  (-2,4) and (-3,{sqrt(3)}) .. (0,0); 
\end{scope}
}
\end{tikzpicture}
\caption{The petals $W_j$, $j\in\Z_{2k}$ (here, $k=3$ and $a\in\R$), and the transition maps $\Aa_{\theta_j}$ \eqref{eq:Apms}  on the intersections.}
\label{figure:parabolic}
\end{figure}

\subsection{Transition maps and moduli of analytic classification.}\label{sec:3.2}

For each singular direction $\theta_j=-\frac{\pi}{2}+j\pi$, $j=1,\ldots,2k$, one has a \emph{transition map} (\emph{connector}) $\Aa_{\theta_j}$ between the two Fatou coordinates in the adjacent directions
\begin{equation*}\label{eq:Apmsdefinition}
	\BSum_{\theta_j\mns}\{\hat\psi\}=\Aa_{\theta_j}\circ\BSum_{\theta_j\pls}\{\hat\psi\}.
\end{equation*}
Since each $\Aa_{\theta_j}(t)$ commutes with the translation $t\mapsto t+1$ and is asymptotic to the identity
(since all the sectorial Fatou coordinates are asymptotic to the same formal Fatou coordinate), 
it can be expressed in terms of a Fourier series of the form
\begin{equation}\label{eq:Apms}
	\Aa_{\theta_j}(t)=t+\sum_{\omega\in \Sing_\theta}A_{\omega,j}\, \e^{\omega t},\qquad A_{\omega,j}\in\C,
\end{equation}
where 
\[\Sing_{\theta_j}= \e^{\i\theta_j}\R_{>0}\cap2\pi\i \Z.\]
In another words
\begin{equation}\label{eq:A+-}
	 \BSum_{\theta_j\mns}\{\hat\psi\}-\BSum_{\theta_j\pls}\{\hat\psi\}=\sum_{\omega\in \Sing_\theta}A_{\omega,j}\, \e^{\omega \BSum_{\theta\pls}\{\hat\psi\}}.
\end{equation}

The collection of the transition maps 
\[\big\{\Aa_{\theta_j}(t)\big\}_{j=1}^{2k},\qquad \theta_j=(j\!-\!\tfrac12)\pi,\] 
is uniquelly determined up to conjugation by translations $t\mapsto t+C$, $C\in\C$, 
\begin{equation}\label{eq:cocycleequivalence}
	\Aa_{\theta_j}\sim {\Aa}'_{\theta_j}\ \Longleftrightarrow\ \exists\, C\in\C: \ {A}'_{\omega,j}=A_{\omega,j}\, \e^{\omega C}\quad\text{for all }\ \omega\in\Sing_{\theta_j}.
\end{equation} 
corresponding to the liberty of the choice of the constant in the formal Fatou coordinate, $\hat\psi'=\hat\psi-C$ and  ${A}'_{\omega,j}\, \e^{\omega \hat\psi'}=A_{\omega,j}\, \e^{\omega \hat\psi}$, cf. \eqref{eq:A+-}.

The resulting equivalence class 
$\big\{ \Aa_{\theta_j}\big\}_{j=1}^{2k}\big/\C$
is called a \emph{cocycle} or \emph{Birkhoff--\'Ecalle--Voronin modulus}.
It is an analytic invariant of $f$  which expresses the obstruction to convergence of the formal Fatou coordinate. 
It was initially described by G.D.~Birkhoff \cite{Birkhoff} and later independently by J.~\'Ecalle \cite{Ecalle1,Ecalle} and S.M.~Voronin \cite{Voronin}.

~
\goodbreak

\begin{theorem}[Analytic classification]\label{thm:Ecalle1}~
	\begin{enumerate}[wide=0pt, leftmargin=\parindent]
		\item \textnormal{(Birkhoff \cite{Birkhoff}, \'Ecalle \cite{Ecalle1, Ecalle}, Voronin \cite{Voronin}).}
		Two germs $f$, $f'$ that are formally tangent-to-identity equivalent $($i.e. have the same model $\fmod$ and the same iterative residue $\rho)$ are analytically tangent-to-identity equivalent if and only if their cocycles $\big\{ \Aa_{\theta_j}\big\}_{j=1}^{2k}\big/\C$, $\big\{{\Aa}'_{\theta_j}\big\}_{j=1}^{2k}\big/\C$ agree.
		\smallskip
		
		\item \textnormal{(\'Ecalle \cite{Ecalle2}, Malgrange \cite{Malg-diffeo}, Voronin \cite{Voronin}).}
		For each model $\fmod$, iterative residue $\rho$, and each collection of transition maps $\big\{\Aa_{\theta_j}\big\}_{j=1}^{2k}$ of the form \eqref{eq:Apms}, there exists an analytic map $f$ whose cocycle is represented by $\big\{\Aa_{\theta_j}\big\}_{j=1}^{2k}$.
	\end{enumerate}	
\end{theorem}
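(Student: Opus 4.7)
The plan is to handle the two parts separately, linking them only through the petal decomposition (Theorem~\ref{thm:sectorialFatou}) and the uniqueness of sectorial Fatou coordinates modulo additive constants.

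For part (1), forward direction, I suppose $f'=h\circ f\circ h^{\circ(-1)}$ with analytic $h(x)=x+\hot(x)$. After shrinking, $h$ carries each petal $W_j$ of $f$ into a petal $W_j'$ of $f'$, and $\psi_{W_j}'\circ h$ satisfies the Abel equation for $f$. It is asymptotic to $\hat\psi'\circ\hat h$, which in turn is a formal Fatou coordinate for $f$, hence equals $\hat\psi+C$ for a single constant $C\in\C$ (the indeterminacy of the formal coordinate). Uniqueness of sectorial Fatou coordinates asymptotic to a given formal one then forces $\psi_{W_j}'\circ h=\psi_{W_j}+C$ with the \emph{same} $C$ for every $j$. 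Substituting into the defining relation $\psi_{W_{j-1}}=\Aa_{\theta_j}\circ\psi_{W_j}$ on the overlap of $W_j$ and $W_{j-1}$ produces $\Aa'_{\theta_j}(t+C)=\Aa_{\theta_j}(t)+C$, i.e.\ the cocycle equivalence \eqref{eq:cocycleequivalence}. For the converse, after normalizing by a translation so that $\Aa'_{\theta_j}=\Aa_{\theta_j}$ pointwise, I define $h_j:=(\psi_{W_j}')^{\circ(-1)}\circ\psi_{W_j}$ on each $W_j$. The compatibility $h_j=h_{j-1}$ on overlaps is then the purely algebraic identity
\[
(\psi_{W_{j-1}}')^{\circ(-1)}\circ\Aa_{\theta_j}\circ\psi_{W_j}=(\psi_{W_{j-1}}')^{\circ(-1)}\circ\Aa'_{\theta_j}\circ\psi_{W_j},
\]
which now holds by assumption. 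The glued map $h$ is tangent-to-identity and hence bounded on a full punctured neighborhood of $0$, extends holomorphically to $0$ by Riemann's removable singularity theorem, and conjugates $f$ to $f'$ petalwise, hence globally by analytic continuation.

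For the realization statement (2), the plan is to build $f$ from the abstract data. I start with $2k$ open half-planes $H_j\subset\C_t$, one for each petal, glue each $H_j$ to $H_{j-1}$ near infinity in the direction $\theta_j$ by identifying $t\in H_j$ with $\Aa_{\theta_j}(t)\in H_{j-1}$, and obtain an abstract one-dimensional complex manifold $X$. Since every $\Aa_{\theta_j}$ commutes with the unit translation, the map $t\mapsto t+1$ descends to a biholomorphism $F\colon X\to X$. The crux is to prove that $X$ is biholomorphic to a punctured disc $\{0<|x|<r\}$ in such a way that $F$ extends to a tangent-to-identity germ at $0$ with prescribed model $\fmod$ and residual invariant $\rho$. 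The standard route, following Malgrange and Voronin, is to transport the standard complex structure of the model's petals through the prescribed gluings; this produces an almost complex structure on a topological disc that coincides with the standard one away from the gluing strips. Integrability in real dimension two is automatic (Newlander--Nirenberg), and the measurable Riemann mapping theorem provides a global uniformizing coordinate $x$ compatible with $F$, yielding the desired germ $f$ whose cocycle is, by construction, represented by $\{\Aa_{\theta_j}\}$.

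The main obstacle is part (2): part (1) is essentially organized bookkeeping using the uniqueness of sectorial Fatou coordinates together with Riemann extension, but realization requires genuine global analysis — either the uniformization of planar Riemann surfaces or, equivalently, solving a Beltrami equation via the measurable Riemann mapping theorem. The algebraic gluing is straightforward; what is delicate is the passage from an abstractly glued Riemann surface to a standard punctured disc with controlled behaviour at the puncture, and in particular the verification that the prescribed model and residual invariant are indeed what one recovers at $0$.
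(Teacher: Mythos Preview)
The paper does not prove Theorem~\ref{thm:Ecalle1}: it is stated as a classical result and attributed to the original references (Birkhoff, \'Ecalle, Voronin for part~(1); \'Ecalle, Malgrange, Voronin for part~(2)), with no proof environment following it. So there is nothing to compare against in the paper itself.

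Your sketch is a fair summary of the standard arguments found in those references. Part~(1) is essentially correct as written: the key inputs are the uniqueness of sectorial Fatou coordinates up to an additive constant and Riemann's removable singularity theorem, and your bookkeeping is the usual one. One small point worth tightening in the converse: you should say why the glued map $h$ is bounded on the punctured neighborhood --- this follows because each $h_j$ is asymptotic to the identity on its petal (both $\psi_{W_j}$ and $\psi'_{W_j}$ are asymptotic to the same formal coordinate after the normalization), not merely because it is tangent-to-identity, which is what you are trying to conclude.

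For part~(2) your outline conflates two distinct strategies. Voronin's approach transports the standard conformal structure through the gluings and invokes the measurable Riemann mapping theorem (Ahlfors--Bers) to produce a global quasiconformal chart; Malgrange's approach is cohomological, solving a $\bar\partial$-problem (or equivalently using Newlander--Nirenberg, which in real dimension two is elementary) to trivialize the cocycle. Either route works, but they are not the same argument, and the sentence ``Integrability in real dimension two is automatic (Newlander--Nirenberg), and the measurable Riemann mapping theorem provides a global uniformizing coordinate'' reads as if both tools are needed in sequence, which they are not. You also correctly flag, but do not resolve, the genuinely delicate step: verifying that the resulting germ has the prescribed formal invariants $(k,a,\rho)$. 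This requires controlling the asymptotics of the uniformizing map near the puncture, which is where the real work in the cited papers lies.
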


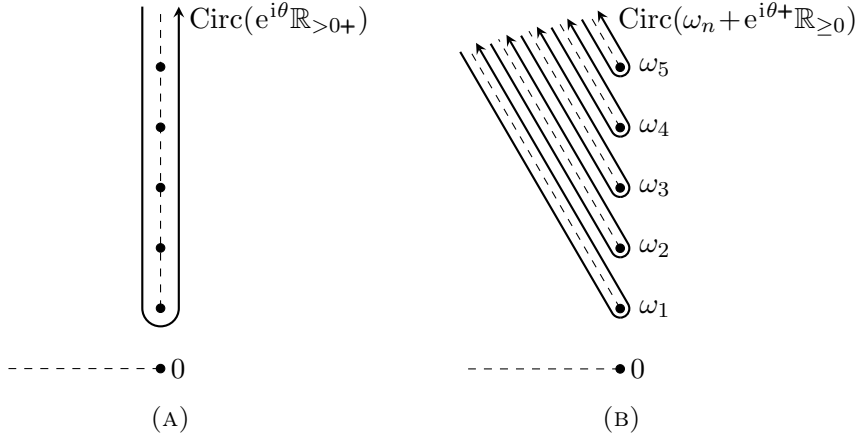
\begin{figure}[t]
	\begin{subfigure}{.3\textwidth} 
		\begin{tikzpicture}[scale=.8]
			\draw[dashed] (0,0) -- (-2.5,0);
			\draw[dashed] (0,1) -- (0,6);
			\draw[-stealth, thick] (-0.3,6) -- (-0.3,1)  arc[start angle=180, end angle=360,radius=0.3] -- (0.3,6);
			\foreach \n in {0,...,5} {\filldraw (0,\n) circle (2pt);};
			\draw (0,0) node[right]{$0$};
			\draw (0.3,5.8) node[right]{${\Circ(\e^{\i\theta}\R_{>0\pls})}$};
		\end{tikzpicture} 
		\caption{} 
	\end{subfigure}	
	\qquad\qquad
	\begin{subfigure}{.3\textwidth} 
		\begin{tikzpicture}[scale=.8]
			\draw[dashed] (0,0) -- (-2.5,0);
			\foreach \n in {1,...,5} {
				\begin{scope}
					\draw[dashed] (0,\n) -- +(120:6-\n);
					\filldraw (0,\n) circle (2pt); \draw (0.15,\n) node[right]{$\omega_\n$};
					\draw[-stealth, thick] (0,\n-0.15) arc[start angle=-90, end angle=30,radius=0.15] -- +(120:6-\n);
					\draw[thick] (0,\n-0.15) arc[start angle=-90, end angle=-150,radius=0.15] -- +(120:6-\n);
				\end{scope}
			};
			\filldraw (0,0) circle (2pt) node[right]{$0$};
			\draw (-0.2,5.8) node[right]{${\Circ(\omega_n\!+\!\e^{\i\theta\pls}\R_{\geq 0})}$};
		\end{tikzpicture}
		\caption{}
	\end{subfigure}	
	\caption{The integrating contour $\Circ(\e^{\i\theta}\R_{>0\protect\pls})$ in a singular direction $\theta$ and its deformations as an infinite union of integrating contours in a direction $\theta\protect\pls$. 	The dashed lines represent ramification cuts.}
	\label{figure:contour1}
\end{figure}

\subsection{Reading the coefficients \texorpdfstring{$A_{\omega,j}$}{A} in the Borel plane.}\label{sec:3.3}   

Let $\hat\psi$ be the formal Fatou coordinate of a prenormalized $f$, as in \eqref{eq:formalFatou1} and $\Psi(s)=\Borel_\alpha\{\hat\psi\}(s)$  \eqref{eq:BorelFatou} its minor Borel transform defined as a distribution on an initial segment of $\e^{\i\alpha}\R_{>0}$ and extended analytically (see Remark~\ref{rem:continuation}).

We show here that the invariants ${A}_{\omega,j}$ (the Fourier coefficients of $\Aa_{\theta_j}$ \eqref{eq:A+-}) can be read from the coefficient of the leading singular term of $\Psi(s)$ at the singularity $\omega$, when approached from the negative, resp. positive, side as in Figure~\ref{figure:contour1}.

For a singular direction $\theta_j$, we have
\begin{equation}\label{eq:differenceFatou}
	\begin{aligned}
		\BSum_{\theta_j\mns}\{\hat\psi\}-\BSum_{\theta_j\pls}\{\hat\psi\}&=\int_{\Circ(\e^{\i\theta_j}\R_{>0\pls})}\!\!\!\! \Psi(s)\,\e^{s\,t(x)}\d s\\
		&=\sum_{\omega\in\Sing_{\theta_j}}\int_{\Circ(\omega+\e^{\i\theta_j\pls}\R_{\geq 0})}\!\!\!\! \Psi(s)\,\e^{s\,t(x)}\d s\\
		&=\sum_{\omega\in\Sing_{\theta_j}}\!\!\!\!\Laplace_{\theta_j\pls}\{\BV[\Psi(s)]_{\omega+\e^{\i\theta_j\pls}\R_{\geq 0}}\},
	\end{aligned}
\end{equation}
see Figure~\ref{figure:contour1}.
Comparing with the identity \eqref{eq:A+-} we obtain
\begin{equation}\label{eq:Aomega}
	A_{\omega,j} \e^{\omega \BSum_{\theta_j\pls}\{\hat\psi\}}=\Laplace_{\theta_j\pls}\{\BV[\Psi(s)]_{\omega+\e^{\i\theta_j\pls}\R_{\geq 0}}\},\qquad \omega\in\Sing_{\theta_j}.
\end{equation} 
Indeed, since the direction  $\theta_j\pls$ of the Hankel contours $\Circ(\omega +\e^{\i\theta_j\pls}\R_{\geq 0})$ can be varied a bit, each of the shifted Laplace integrals 
\[\e^{-\omega t(x)}\Laplace_{\theta_j\pls}\{\BV[\Psi(s)]_{\omega+\e^{\i\theta_j\pls}\R_{\geq 0}}\}=
\Laplace_{\theta_j\pls}\{\BV[\Psi(s+\omega)]_{\e^{\i\theta_j\pls}\R_{\geq 0}}\}\]
is bounded on a sector in $t(x)$-coordinate of opening $>\pi$.
Therefore the equality between \eqref{eq:A+-} and \eqref{eq:differenceFatou} holds term-wise. 

Now, since
\[A_{\omega,j}\, \e^{\omega \BSum_{\theta_j\pls}\{\hat\psi\}}
=\Laplace_{\theta_j\pls}\big\{\Borel_{\theta\pls}\{A_{\omega,j}\,  \e^{\omega\hat\psi}\}(s)\big\},\]
the identity \eqref{eq:Aomega} rewrites as
\begin{equation}\label{eq:da1}
\begin{aligned}
	\BV[\Psi(s)]_{\omega+\e^{\i\theta_j\pls}\R_{\geq 0}}&=\Borel_{\theta_j\pls}\{A_{\omega,j}\, \e^{\omega \hat\psi}\}(s)\\
	&=\BV\big[\widetilde\Borel_{\theta_j\pls}\{A_{\omega,j}\, \e^{\omega \hat\psi}\}(s)\big]_{\omega+\e^{\i\theta_j\pls}\R_{\geq 0}}.	
\end{aligned}
\end{equation}
We compute:
\begin{equation}\label{eq:da2}
\begin{aligned}
	A_{\omega,j}\, \e^{\omega\hat\psi(x)}&=A_{\omega,j}\, \e^{\omega C'}\e^{\omega t(x)} t(x)^{-\frac{\omega\rho}{k}}\e^{\omega \hat r(x)}\\
&= {A}'_{\omega,j}\, \e^{\omega t(x)} \big( t(x)^{-\frac{\omega\rho}{k}}+\hot(x)\big),
\end{aligned}
\end{equation}
where the coefficients ${A}'_{\omega,j}:=A_{\omega,j}\, \e^{\omega C'}$ represent the same classifying cocycle, determined modulo \eqref{eq:cocycleequivalence}.
The multiplication by $\e^{\omega t(x)}$ acts under the Borel transformation as the shift operator $s\mapsto s-\omega$,
which shifts the origin to $\omega$ and the integration path $\e^{\i\theta_j\pls}$ of the Laplace transform to $\omega+\e^{\i\theta_j\pls}$. 
By \eqref{eq:da1} and \eqref{eq:da2} we have locally near $s=\omega\in\Sing_{\theta_j}$:
\begin{align*}
	&[\Psi(s)]_{(\omega+\e^{\i\theta_j\pls}\R_{\geq 0},\,\omega)}
	=\Big[\widetilde\Borel_{\theta_j}\big\{{A}'_{\omega,j} \big(t(x)^{-\frac{\omega\rho}{k}}\!+\hot(x)\big)\big\}(s\!-\!\omega)\Big]_{(\omega+\e^{\i\theta_j\pls}\R_{\geq 0},\,\omega)}\\[6pt]
	&=
\begin{cases}
	\Big[ \mfrac{{A}'_{\omega,j}\, \Gamma(1-\tfrac{\omega\rho}{k})}{2\pi\i} (s\!-\!\omega)^{\frac{\omega\rho}{k}-1}
	+\hot(s\!-\!\omega)\Big]_{(\omega+\e^{\i\theta_j\pls}\R_{\geq 0},\,\omega)},& \tfrac{\omega\rho}{k}\notin\Z_{> 0}	\\
	\Big[  \mfrac{{A}'_{\omega,j}\, \e^{\pi\i \frac{\omega\rho}{k}}}{2\pi\i\,\Gamma(\tfrac{\omega\rho}{k})}  (s\!-\!\omega)^{\frac{\omega\rho}{k}-1}\!\log(s\!-\!\omega)+\hot(s\!-\!\omega) \Big]_{(\omega+\e^{\i\theta_j\pls}\R_{\geq 0},\, \omega)},\!\!\!\!\!
	& \tfrac{\omega\rho}{k}\in\Z_{> 0}.
\end{cases}
\end{align*}
Let us recall that $\big[\Psi(s)\big]_{(\omega+\e^{\i\theta_j\pls}\R_{\geq 0},\, \omega)}$ denotes the equivalence class of the ramified singular germ $\Psi(s)$ at $\omega$ (with ramification cut over $\omega+\e^{\i\theta_j\pls}\R_{\geq 0}$) modulo analytic germs at $\omega$ (see p.\pageref{p:germofhyperfunction}).\footnote{In the notation of \cite[p.200]{MitschiSauzin} the equivalent of $\big[\Psi(s)\big]_{(\omega+\e^{\i\theta_j\pls}\R_{\geq 0},\, \omega)}$ is \ $\textrm{sing}_\omega\big(\Psi(s)\big)$.}
This means that the Fourier coefficient $A'_{\omega,j}$ can be read from the coefficient of the leading singular term of $\Psi(s)$ at $\omega$.

In particular, in the special case $\rho=0$, the distribution $\Psi(s)$ takes at each singularity $\omega$ the simple form (called a \emph{simple resurgent singularity} in \cite{MitschiSauzin}):
\begin{equation*}\label{eq:srs}
\Psi(s)=\mfrac{{A}'_{\omega,j}}{2\pi\i }(s\!-\!\omega)^{-1} \mod \log(s\!-\!\omega)\,\C\{s\!-\!\omega\}+(s\!-\!\omega)^{\frac{1-k}{k}}\C\{(s\!-\!\omega)^{\frac{1}{k}} \}.
\end{equation*}

\section{Study of the orbit and proofs of main results}\label{sec:Thetaorbit}

\subsection{Reading the formal class from the orbit}
First of all we show that one can ``read'' any initial part of the Taylor expansion of $f$ at $0$, in particular the formal invariants of $f$, from asymptotics of the sequence $\orbit=(x_n)_{n\in\N}$. Similar statement is proven in \cite{Resman1} using the asymptotic expansion of the $\epsilon$-neighborhood of $\orbit$.

\begin{proposition}
	The map $\N\to\orbit\subseteq \mathbb C$, $m\mapsto x_m$, has a complete   asymptotic expansion into a power-log series, consisting of powers of $m^{-\frac1k}$ and of $\,\rho\frac{\log m}{m}$,
	\[x_m \sim\hat\psi_\orbit^{\circ(-1)}(m),\qquad\text{as }\ m\to+\infty,\]
	where $\hat\psi_\orbit^{\circ(-1)}(m)\in m^{-\frac1k}\C\llbracket m^{-\frac1k},\rho\frac{\log m}{m}\rrbracket$.
\end{proposition}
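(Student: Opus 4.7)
My plan is to transfer the asymptotic expansion of the sectorial Fatou coordinate through a formal inversion. By Theorem~\ref{thm:sectorialFatou}, the sectorial Fatou coordinate $\psi_\orbit$ on the petal $W_\orbit\supset\orbit$, uniquely determined by $\psi_\orbit(x_0)=0$, is asymptotic as $x\to 0$ in $W_\orbit$ to its formal counterpart
\[
\hat\psi_\orbit(x)=t(x)-\tfrac{\rho}{k}\log t(x)+C'+\hat r(x),\qquad \hat r\in x\C\llbracket x\rrbracket,
\]
with $t(x)=-\tfrac{1}{ak}x^{-k}$ and a normalization constant $C'$. The Abel equation together with the initial condition gives $\psi_\orbit(x_m)=m$ for every $m\in\N$, so $x_m=\psi_\orbit^{\circ(-1)}(m)$, and it suffices to expand $\psi_\orbit^{\circ(-1)}(s)$ as $s\to+\infty$.

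Next I would formally invert $\hat\psi_\orbit$. Writing $s=\hat\psi_\orbit(x)$ and $t=t(x)$, the equation becomes
\[
t=s+\tfrac{\rho}{k}\log t-C'-\hat r\bigl((-akt)^{-1/k}\bigr),
\]
which I would solve by iteration starting from $t\approx s$. At each step, expanding $\log t$ around $\log s$ produces new series-type terms in $s^{-1/k}$ and $\log s/s$, where each logarithmic factor is generated by the $\tfrac{\rho}{k}\log t$ term and therefore carries an additional power of $\rho$. Converting back via $x=(-akt)^{-1/k}$ then absorbs the leading $\tfrac{\rho}{k}\log s$ piece into an $x$-level correction $\tfrac{\rho}{k^2}\tfrac{\log s}{s}$ (plus higher-order mixed terms), yielding
\[
\hat\psi_\orbit^{\circ(-1)}(s)\in s^{-1/k}\C\llbracket s^{-1/k},\tfrac{\rho\log s}{s}\rrbracket.
\]

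Finally, I would invoke the standard principle that asymptotic expansions pass to the inverse of an injective analytic map: since $\psi_\orbit$ is biholomorphic from a tail of $W_\orbit$ onto a sectorial neighborhood of $+\infty$ in the $s$-plane containing the positive real axis, and since it is asymptotic to $\hat\psi_\orbit$ there, its analytic inverse $\psi_\orbit^{\circ(-1)}$ is asymptotic to $\hat\psi_\orbit^{\circ(-1)}$ on that sector. Specialising to $s=m\in\N$ yields $x_m\sim\hat\psi_\orbit^{\circ(-1)}(m)$. The main technical point is the bookkeeping in the second step: verifying inductively that every $(\log s)^l$ term appearing in the inverse is accompanied by a coefficient divisible by $\rho^l$, so that the expansion genuinely lies in the claimed ring, and in particular reduces to a pure $s^{-1/k}$-series when $\rho=0$.
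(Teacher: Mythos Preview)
Your proposal is correct and follows essentially the same approach as the paper: both argue that $x_m=\psi_\orbit^{\circ(-1)}(m)$ via the Abel equation, that asymptoticity of $\psi_\orbit$ to $\hat\psi_\orbit$ transfers to the inverse, and that the formal inverse lies in $m^{-1/k}\C\llbracket m^{-1/k},\rho\tfrac{\log m}{m}\rrbracket$. The only difference is that the paper disposes of the structure of $\hat\psi_\orbit^{\circ(-1)}$ by citing Kimura \cite[Theorem~3.1]{Kimura} and \'Ecalle \cite[eq.~(5.1.6)]{Ecalle3}, whereas you sketch the iterative inversion directly; your bookkeeping of the $\rho^l$ factors accompanying $(\log s)^l$ is exactly what those references establish.
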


\begin{proof}
Since the sectorial Fatou coordinate $\psi_\orbit$ is asymptotic on $W_\orbit$ to $\hat\psi_\orbit(x)$ \eqref{eq:formalFatou}, 
then also $x_m=\psi_\orbit^{\circ(-1)}(m)$ is asymptotic to $\hat\psi_\orbit^{\circ(-1)}(m)$ as $m\to+\infty$. 
The formal inverse $\hat\psi_\orbit^{\circ(-1)}(t)$ is a formal transseries in $t^{-\frac{1}{k}}$ and $\rho t^{-1}\log t$ \cite[Theorem 3.1]{Kimura}, \cite[eq. (5.1.6)]{Ecalle3}.
\end{proof}

In particular, since $\psi_\orbit^{\circ(-1)}(m)=(-akm)^{-\frac1k}+o(m^{-\frac1k})$, the topological model \eqref{eq:fmod} can be read from $\orbit$ as:
\[k=\lim_{m\to+\infty}-\frac{\log m}{\log{x_m}},\qquad a=\lim_{m\to+\infty}-\frac{x_m^{-k}}{km}.\]

\subsection{Study of the dynamic theta function of an orbit.} ~\\
Let $\orbit=\{x_0,x_1,x_2,\ldots\}$ be a forward orbit \eqref{eq:orbit}, and 
$\Theta_\orbit(s)=\sum_{x_i\in\orbit}\e^{-s\,t(x_i)}$
its dynamic theta function \eqref{eq:Theta}.
Let $W_\orbit$ be the attractive petal of Theorem~\ref{thm:sectorialFatou} which contains the orbit $\orbit$ (see Figure~\ref{figure:domain1}), 
and let
\[\psi_\orbit(x)=\BSum_\alpha\{\hat\psi_\orbit\}(x)\] 
with $\alpha$ in some interval:
\begin{equation}\label{eq:thetaalpha}
	\alpha\in\ ]\,\underline{\theta},\overline{\theta}\,[\ = \ ]\tfrac{\pi}{2},\tfrac{3\pi}{2}[\mod 2\pi\Z,
\end{equation} 
be the unique sectorial Fatou coordinate for $f$ on $W_\orbit$ such that $\psi_\orbit(x_0)=0$, and therefore  
\[\psi_\orbit(x_n)=n\quad\text{for all $n\in\N$}.\]

In the expression \eqref{eq:Thetadelta} of $\Theta_\orbit$, the distribution
$\delta_\orbit(x)=\sum_{x_i\in\orbit}\delta_{x_i}(x)$ can be written as
\begin{equation*}\label{eq:Diracorbit}
    \delta_\orbit(x)\d x=\big(\delta_\N\circ\psi_\orbit(x)\big) \d \psi_\orbit(x),\qquad \delta_\N(t)=\sum_{n\in\N}\delta_n(t),
\end{equation*}
since the Diracs $\delta_n(t)$ transform in the way of the differential forms $\delta_n(t)\d t$.
By the residue theorem, we have
\begin{equation*}\label{eq:Bm}
	\delta_\N(t)=\BV[B_m(t)]_{\R_{\geq0}},\qquad  B_m(t)=\frac{\e^{2\pi\i  mt}}{\e^{2\pi\i  t}-1},
\qquad\text{for any $m\in\Z$.}
\end{equation*}
Therefore the formula \eqref{eq:Thetadelta} for $\Theta_\orbit(s)$ can be rephrased using the residue theorem as:
\begin{equation}\label{eq:ThetaB}
	\begin{aligned}
		\Theta_\orbit(s) &=\int_{\psi_\orbit(x)\in\Circ(\R_{\geq 0})}(B_m\!\circ\psi_\orbit)\,\e^{-s\,t(x)} \d\psi_\orbit(x),\qquad \re(s)>0,
	\end{aligned}
\end{equation}
for any $m\in\Z$.

\begin{proposition}\label{prop:Theta}
For any $\alpha\in\ ]\,\underline{\theta},\overline{\theta}\,[$ from \eqref{eq:thetaalpha}, the function	
$\Theta_\orbit(s)$ \eqref{eq:ThetaB} admits an analytic continuation from $\{\Re(s)>0\}$ to the slit plane $\C\sminus\bigcup_{\omega\in 2\pi\i \Z}(\omega+\e^{\i\alpha}\R_{\geq0})$, see Figure~\ref{figure:1}.	For each $\omega\in 2\pi\i \Z$,
	\begin{equation}\label{eq:BVThetaomega}
		\tfrac{1}{2\pi\i }\BV[\Theta_\orbit(s)]_{\omega+\e^{\i\alpha}\R_{\geq 0}}
		=\BV\big[\widetilde\Borel_\alpha\{\e^{\omega\hat\psi_\orbit}\tfrac{\d\hat\psi_\orbit}{\d t}\}(s)\big]_{\omega+\e^{\i\alpha}\R_{\geq 0}}
		=\Borel_\alpha\{\e^{\omega\hat\psi_\orbit}\tfrac{\d\hat\psi_\orbit}{\d t}\}(s).
	\end{equation}
Hence,
\begin{equation}\label{eq:BVThetaFatou0}
	\tfrac{1}{2\pi\i }\BV\big[\tfrac{\Theta_\orbit(s)}{s}\big]_{\omega+\e^{\i\alpha}\R_{\geq 0}}=   
	\begin{cases}
		\Borel_\alpha\{\hat\psi_\orbit\}(s),&\omega=0,\\[6pt] \tfrac{1}{\omega}\Borel_\alpha\{\e^{\omega\hat\psi_\orbit}\}(s),& \omega\in 2\pi\i\Z\sminus\{0\},
	\end{cases}
\end{equation}
and
\begin{equation}\label{eq:BVThetaFatou}
\tfrac{1}{2\pi\i }\int_{\Circ(\omega+\e^{\i\alpha}\R_{\geq 0})}\!\!\! \tfrac{\Theta_\orbit(s)}{s}\,\e^{s\,t(x)}\d s=
	\begin{cases}
	\psi_\orbit(x),&\omega=0,\\[6pt] \tfrac{1}{\omega} \e^{\omega \psi_\orbit(x)},& \omega\in 2\pi\i\Z\sminus\{0\}.
	\end{cases}
\end{equation}
\end{proposition}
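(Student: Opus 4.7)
The plan is to transport the integral representation \eqref{eq:ThetaB} from the $u=\psi_\orbit(x)$-coordinate to the $t=t(x)$-coordinate, and then expand the kernel $B_0(u)=\tfrac{1}{\e^{2\pi\i u}-1}$ in geometric series on each bank of the Hankel contour. Setting $\tilde\psi(t):=\psi_\orbit(x(t))$, which by \eqref{eq:psialpha} satisfies $\tilde\psi(t)=t-\tfrac{\rho}{k}\log t+O(1)$ as $t\to\infty$ in the petal, the substitution $du=\tilde\psi'(t)\,\d t$ turns \eqref{eq:ThetaB} into
\[
\Theta_\orbit(s)=\int_\Gamma B_0\bigl(\tilde\psi(t)\bigr)\,\e^{-s\,t}\,\tilde\psi'(t)\,\d t,\qquad \Gamma:=\tilde\psi^{-1}\bigl(\Circ(\R_{\geq 0})\bigr),
\]
where $\Gamma$ is a Hankel-type contour in the $t$-plane encircling $\R_{\geq 0}$, freely deformable inside the $t$-image of the petal.

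On the two banks $\Gamma_\pm$ of $\Gamma$ (with $\im\tilde\psi>0$ on $\Gamma_+$, $<0$ on $\Gamma_-$) one has the absolutely convergent expansions $B_0(u)=-\sum_{k\geq 0}\e^{2\pi\i k u}$ and $B_0(u)=\sum_{k\geq 1}\e^{-2\pi\i k u}$ respectively. Substituting and interchanging sum with integral (justified by uniform geometric decay together with the Laplace exponential $\e^{-st}$) yields
\[
\Theta_\orbit(s)=-\!\!\!\sum_{\omega\in 2\pi\i\N}\int_{\Gamma_+}\!\!\e^{\omega\tilde\psi(t)-s\,t}\,\tilde\psi'(t)\,\d t\ +\!\!\sum_{\omega\in 2\pi\i\Z_{<0}}\int_{\Gamma_-}\!\!\e^{\omega\tilde\psi(t)-s\,t}\,\tilde\psi'(t)\,\d t,
\]
in which the $\omega$-th summand is analytic in $s$ on $\C\sminus(\omega+\e^{\i\alpha}\R_{\geq 0})$ with at most exponential growth, the Laplace-type integral converging because the integrand has asymptotic behavior $\e^{\omega t}t^{-\omega\rho/k}\bigl(1+O(t^{-1/k})\bigr)$.

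Since $\tilde\psi=\BSum_\alpha\{\hat\psi_\orbit\}$, the integrand $\e^{\omega\tilde\psi(t)}\tilde\psi'(t)$ is the Borel sum in direction $\alpha$ of the formal expression $\e^{\omega\hat\psi_\orbit}\tfrac{\d\hat\psi_\orbit}{\d t}$, so each Hankel integral along $\Gamma_\pm$ equals $2\pi\i$ times the major Borel transform $\widetilde\Borel_\alpha\bigl\{\e^{\omega\hat\psi_\orbit}\tfrac{\d\hat\psi_\orbit}{\d t}\bigr\}(s)$, viewed as a germ of hyperfunction at $\omega+\e^{\i\alpha}\R_{\geq 0}$. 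Taking the distributional boundary value across that cut (only the $\omega$-summand is non-analytic there) produces the central identity \eqref{eq:BVThetaomega}; Theorem~\ref{thm:Ecalle}, applied after the shift $s\mapsto s-\omega$ to each Borel transform, then extends $\Theta_\orbit(s)$ to the universal cover of $\C\sminus 2\pi\i\Z$ with the claimed growth. Formula \eqref{eq:BVThetaFatou0} follows from the rule $\Borel_\alpha\bigl\{\tfrac{\d h}{\d t}\bigr\}(s)=s\,\Borel_\alpha\{h\}(s)$: for $\omega\neq 0$, writing $\e^{\omega\hat\psi_\orbit}\tfrac{\d\hat\psi_\orbit}{\d t}=\tfrac{1}{\omega}\tfrac{\d}{\d t}\e^{\omega\hat\psi_\orbit}$ gives $\tfrac{1}{s}\Borel_\alpha\bigl\{\e^{\omega\hat\psi_\orbit}\tfrac{\d\hat\psi_\orbit}{\d t}\bigr\}=\tfrac{1}{\omega}\Borel_\alpha\{\e^{\omega\hat\psi_\orbit}\}$, and for $\omega=0$ the same rule yields $\tfrac{1}{s}\Borel_\alpha\bigl\{\tfrac{\d\hat\psi_\orbit}{\d t}\bigr\}=\Borel_\alpha\{\hat\psi_\orbit\}$. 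Finally, applying the two-sided Laplace $\widetilde\Laplace_\alpha$ to \eqref{eq:BVThetaFatou0} and using Borel--Laplace inversion $\widetilde\Laplace_\alpha\bigl\{\Borel_\alpha\{\hat h\}\bigr\}=\BSum_\alpha\{\hat h\}$ returns \eqref{eq:BVThetaFatou}, with $\psi_\orbit(x)$ in the $\omega=0$ case and $\tfrac{1}{\omega}\e^{\omega\psi_\orbit(x)}$ in the $\omega\neq 0$ case.

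The main obstacle is the identification step underlying \eqref{eq:BVThetaomega}. The integrand $\e^{\omega\tilde\psi(t)}\tilde\psi'(t)$ is analytic, built from the sectorial Fatou coordinate, whereas the right-hand side involves the divergent formal series $\e^{\omega\hat\psi_\orbit}\tfrac{\d\hat\psi_\orbit}{\d t}$. The two differ by a flat (Gevrey) remainder on the petal, so their major Borel transforms ought to agree as hyperfunction germs across the cut $\omega+\e^{\i\alpha}\R_{\geq 0}$, but making this precise requires combining the Borel summability of $\hat r$ in \eqref{eq:formalFatou1} with the resurgent continuation provided by Theorem~\ref{thm:Ecalle} to control both objects along the cut.
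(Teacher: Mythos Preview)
Your approach is essentially the same as the paper's: both expand $B_m\circ\psi_\orbit$ as a geometric series on the two banks of the Hankel contour and identify each $\omega$-summand as $2\pi\i$ times a major Borel transform of $\e^{\omega\hat\psi_\orbit}\tfrac{\d\hat\psi_\orbit}{\d t}$. The derivation of \eqref{eq:BVThetaFatou0} and \eqref{eq:BVThetaFatou} from \eqref{eq:BVThetaomega} is fine.

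However, there is a genuine gap in your treatment of the analytic continuation and of the claim that ``only the $\omega$-summand is non-analytic'' at the cut $\omega+\e^{\i\alpha}\R_{\geq 0}$. You have an \emph{infinite} sum over $\omega\in 2\pi\i\Z$, and while each individual term extends to $\C\sminus(\omega+\e^{\i\alpha}\R_{\geq 0})$, you have not shown that the series converges (let alone locally uniformly) outside the original half-plane $\{\Re(s)>0\}$. Knowing that every term with $\omega'\neq\omega$ is analytic near $\omega$ does not by itself imply that their infinite sum is analytic there. The paper closes this gap in two moves. First, it establishes the continuation to the slit plane \emph{before} expanding, by deforming the Hankel contour $\Circ(\R_{\geq 0})$ in the $\psi_\orbit$-plane to a straight line $b+\i\e^{-\i\alpha}\R$; the resulting integral converges on the strip between $2\pi\i(m-1)$ and $2\pi\i m$, and varying $m\in\Z$ covers the whole slit plane. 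Second, to isolate the singularity at $\omega=2\pi\i m$, it \emph{resums} the two tails $\sum_{\omega'/2\pi\i\geq m+1}$ and $\sum_{\omega'/2\pi\i\leq m-1}$ back into integrals of $B_{m+1}\circ\psi_\orbit$ and $B_m\circ\psi_\orbit$ along half-lines, each of which converges on a half-plane containing $2\pi\i m$; their sum is therefore analytic on a strip around $2\pi\i m$, so the difference $\Theta_\orbit(s)-(\text{$m$-th term})$ is analytic there. This is the missing step in your argument.

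By contrast, the ``main obstacle'' you flag---identifying the Borel transform of the analytic $\e^{\omega\tilde\psi}\tilde\psi'$ with that of the formal $\e^{\omega\hat\psi_\orbit}\tfrac{\d\hat\psi_\orbit}{\d t}$---is not really an obstacle: since $\psi_\orbit=\BSum_\alpha\{\hat\psi_\orbit\}$, composition with the entire map $u\mapsto\e^{\omega u}$ preserves Borel summability, and the integral Borel transform of a Borel sum agrees, as a hyperfunction germ on $\e^{\i\alpha}\R_{\geq 0}$, with the termwise Borel transform of the formal series. The paper simply passes between $\psi_\orbit$ and $\hat\psi_\orbit$ without comment.
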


\begin{figure}[t]
		\begin{tikzpicture}[yscale=.7, xscale=.9]
			\fill[fill=black!25] (-1.5,0) arc[start angle=0, end angle=80,radius=1.2] -- ++(170:2) -- ++(90:3) -- ++(0:9.3) |- cycle; 
			\fill[fill=black!25] (-1.5,0) arc[start angle=0, end angle=-80,radius=1.2] -- ++(-170:2) -- ++(-90:3) -- ++(0:9.3) |- cycle; 
			\draw[dashed, gray] (-1.5,0) arc[start angle=0, end angle=80,radius=1.2] -- ++(170:2);
			\draw[dashed, gray] (-1.5,0) arc[start angle=0, end angle=-80,radius=1.2] -- ++(-170:2);
			\foreach \n in {-4,...,4} {\filldraw (\n,0) circle (2pt);};
			\draw (0,0) node[right]{$0$};
			\draw[thick, ->] (4.5,0.4) -- (0,0.4) arc[start angle=90, end angle=270,radius=0.4] -- (4.5,-0.4);
			\draw[thick,red, ->] (-0.5,0) arc[start angle=180, end angle=120,radius=1] -- ++(30:4.5);
			\draw[red] (45:5) node[left]{$b+\e^{-\i \beta}\R_{\geq 0}$};
			\filldraw[red] (-0.5,0) circle (2pt); 
			\draw (-.7,-.4) node{$b$};
			\draw[thick,dashed, ->] (-1,4) -- (0,-4);
			\draw (0,-3.5) node[right]{$b+\i\e^{-\i \alpha}\R$};
			\draw (3.3,-0.5) node[below]{$\Circ(\R_{\geq 0})$};
		\end{tikzpicture}
\caption{ The image $\psi_\orbit(W_\orbit)$ of the petal $W_\orbit$, on which the sectorial Fatou coordinate $\psi_\orbit$ is defined, in the $\psi_\orbit(x)$ coordinate. 
The poles of $(B_m\!\circ\psi_\orbit)$ are at the points $\psi_\orbit\in\Z$. Note that we have chosen the constant in the Fatou coordinate $\psi_\orbit$ so that $\psi_\orbit(x_0)=0$, so $0$ is in the image $\psi_\orbit(W_\orbit).$}
	\label{figure:domain1}
\end{figure}
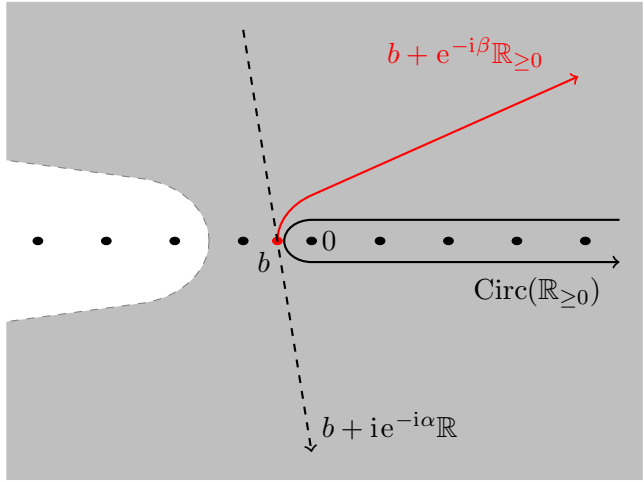 

Proposition~\ref{prop:Theta} gives a way to read the Birkhoff-\'Ecalle-Voronin invariants from the coefficients of leading singular terms at singularities of  $\Theta_\orbit(s)$. It states that the
differences of the coefficients of the leading singular terms of $\frac{\Theta_\orbit(s)}{2\pi\i\,s}$ at any $\omega\in 2\pi\i\Z^*$ on two consecutive sheets (with respect to the ramification at $s=0$) are precisely the leading coefficients of
$\Borel_\alpha\{\psi_\orbit\}(s)$, see Section~\ref{sec:3.3}.
Therefore, the Birkhoff--\'Ecalle--Voronin invariants can be read from the coefficients of the leading singular terms of $\Theta_\orbit(s)$ at the singularities $\omega\in2\pi\i \mathbb Z^*$ on the first $k+1$ sheets (with respect to the ramification at $s=0$).

\begin{proof}
By changing the integration contour in \eqref{eq:ThetaB} we extend $\Theta_\orbit(s)$ from $\Re(s)>0$ to the slit plane $\C\sminus\bigcup_{\omega\in 2\pi\i \Z}(\omega+\e^{\i\alpha}\R_{\geq0})$, for any $\alpha\in\ ]\,\underline{\theta},\overline{\theta}\,[$.
We deform the contour  $\psi_\orbit\in\Circ(\R_{\geq 0})$  to a line $\psi_\orbit\in b+\i\e^{-\i\alpha}\R$ with some $-1<b<0$ and any $\alpha\in\ ]\,\underline{\theta},\overline{\theta}\,[$  (see Figure~\ref{figure:domain1}),
and rewrite \eqref{eq:ThetaB} as
\begin{equation}\label{eq:ThetaC}
\Theta_\orbit(s)=\int_{b+\i \e^{-\i\alpha}\R}\!\!(B_m\!\circ\psi_\orbit(x))\,\e^{-s\,t(x)}\d\psi_\orbit(x),\qquad b\in\ ]\!-\!1,0[.
\end{equation}
While the original integral \eqref{eq:ThetaB} is convergent on the half-plane $\{\re(s)>0\}$, the deformed integral \eqref{eq:ThetaC} is
defined on the strip
$\big\{\frac{\im(\e^{-\i \alpha}s)}{\cos\alpha}\in\ ]2\pi(m\!-\!1),2\pi m[\,\big\}$ between the points $2\pi\i m$ and $2\pi\i(m\!-\!1)$. 
The strip of convergence of \eqref{eq:ThetaC} is deduced by looking at the exponential rate of growth of $(B_m\circ\psi_\orbit)\tfrac{\d\psi_\orbit}{\d t}$ at the two 
ends of the integration line (cf. p.\pageref{p:borel}):
\[(B_m\circ\psi_\orbit)\tfrac{\d\psi_\orbit}{\d t}\approx\begin{cases}
\e^{2\pi\i(m-1)\,t(x)}&\text{when }\ \psi_\orbit\to b+\i\e^{-\i\alpha}(+\infty),\\
\e^{2\pi\i m\,t(x)}&\text{when }\ \psi_\orbit\to b-\i\e^{-\i\alpha}(+\infty).
\end{cases}\]
By the residue theorem, the integrals defined by \eqref{eq:ThetaB} and by \eqref{eq:ThetaC}  agree on the half-strip $\{\Re(s)>0\}\cap \big\{\frac{\im(\e^{-\i \alpha}s)}{\cos\alpha}\in\ ]2\pi(m\!-\!1),2\pi m[\,\big\}$.
Since $m\in\mathbb Z$ can be chosen arbitrarily,
the function $\Theta_\orbit$ extends to the whole slit plane $\C\sminus\bigcup_{\omega\in 2\pi\i \Z}(\omega+\e^{\i\alpha}\R_{\geq0})$.

\smallskip

In order to show the formula \eqref{eq:BVThetaomega}, we first expand 
	\begin{equation}\label{eq:expo}
		B_m(t)=\begin{cases}
			-\!\!\displaystyle\sum_{n=m}^{+\infty}\e^{2\pi\i nt},& \im t>0,\\
			\,\displaystyle\sum^{m-1}_{n=-\infty}\e^{2\pi\i nt},& \im t<0,
		\end{cases}	
	\end{equation}
	as an absolutely convergent sum on the upper and the lower half-plane. 
Dividing the integration contour  
$\psi_\orbit\in\Circ(\R_{\geq 0})$  of \eqref{eq:ThetaB} into two parts  as $(b+\e^{\i0\mns}\R_{\geq 0})-(b+\e^{\i0\pls}\R_{\geq 0})$ with some $-1<b<0$, 
expanding $B_m\circ\psi_\orbit$ according to \eqref{eq:expo} in the upper and lower half-planes, and
exchanging the sum and the integral, we obtain 
\begin{equation}\label{eq:ThetaBsum}
\begin{aligned}
	\Theta_\orbit(s)
	&=\!\begin{multlined}[t][.7\displaywidth]
			\sum_{\frac{\omega}{2\pi\i}\in\Z_{\geq m}}\!\!\int_{ b+\e^{\i0\pls}\R_{\geq 0}}\!\!\!\!\!
			\e^{\omega\psi_\orbit(x)}\,\e^{-s\,t(x)}\d\psi_\orbit(x)\\
			+\sum_{\frac{\omega}{2\pi\i}\in \Z_{\leq m-1}}\!\!\int_{ b+\e^{\i0\mns}\R_{\geq 0}}\!\!\!\!\! 
			\e^{\omega\psi_\orbit(x)}\,\e^{-s\,t(x)}\d\psi_\orbit(x)
		\end{multlined}	\\[6pt]
	&=\sum_{\frac{\omega}{2\pi\i}\in\Z}\int_{b+\e^{-\i\beta}\R_{\geq 0}}\!\!\!\! \e^{\omega\psi_\orbit(x)}\,\e^{-s\,t(x)}\d\psi_\orbit(x),\\
	&=\sum_{\frac{\omega}{2\pi\i}\in\Z}2\pi\i\,\widetilde\Borel_\alpha\big\{\e^{\omega\psi_\orbit(x)}\,\tfrac{\d\psi_\orbit(x)}{\d t(x)}\big\}(s),\qquad\quad
			\beta\in\bigcup_{\alpha\in\ ]\,\underline{\theta}, \overline{\theta}\,[} ]\alpha\!-\!\tfrac{\pi}{2},\alpha\!+\!\tfrac{\pi}{2}[,
\end{aligned}
\end{equation}
where the direction of the integration ray $\psi_\orbit\in b+\e^{-\i\beta}\R_{\geq 0}$ of each term can now be varied within the whole petal $\psi_\orbit(W_\orbit)$. 
The ray $b+\e^{-\i\beta}\R_{\geq 0}$ may possibly need to be deformed  at its initial part in order to stay in $\psi_\orbit(W_\orbit)$.

\smallskip

We now show that each singularity $\omega\in 2\pi\i\Z$ of $\Theta_\orbit(s)$, as approached from  $\{\re(s)>0\}$, comes precisely from the corresponding term 
$2\pi\i\,\widetilde\Borel_\alpha\big\{\e^{\omega\psi_\orbit(x)}\,\tfrac{\d\psi_\orbit(x)}{\d t(x)}\big\}(s)$	in \eqref{eq:ThetaBsum}. That is, that the remainder of the sum does not have any singularity at $\omega$.

Let $\omega=2\pi\i m$, and fix a direction $\alpha\in\ ]\,\underline{\theta}, \overline{\theta}\,[$. 
The partial sum (with $\beta=\alpha+\tfrac\pi2$)
\[
\sum_{\frac{\omega}{2\pi\i}\in\Z_{\geq m+1}}\!\!\int_{b-\i\e^{-\i\alpha}\R_{\geq 0}}\!\!\!\!\!\!\! \e^{\omega\psi_\orbit(x)}\e^{-s\,t(x)}\!\d\psi_\orbit(x)
=
-\!\int_{b-\i\e^{-\i\alpha}\R_{\geq 0}}\!\!\!\!\!\!
(B_{m+1}\circ\psi_\orbit)\,\e^{-s\,t(x)}\!\d\psi_\orbit(x),
\]
converges on the half plane $\big\{\frac{\im(\e^{-\i \alpha}s)}{\cos\alpha}< 2\pi(m\!+\!1)\big\}$,
while the partial sum (with $\beta=\alpha+\tfrac{3\pi}{2}$)
\[\sum_{\frac{\omega}{2\pi\i}\in\Z_{\leq m-1}}\int_{b+\i\e^{-\i\alpha}\R_{\leq 0}}\!\!\!\!\!\! \e^{\omega\psi_\orbit(x)}\e^{-s\,t(x)}\!\d\psi_\orbit(x)
=\int_{b+\i\e^{-\i\alpha}\R_{\geq 0}}\!\!\!\!\! (B_m\circ\psi_\orbit)\,\e^{-s\,t(x)}\!\d\psi_\orbit(x),
\]
converges on the half-plane $\big\{\frac{\im(\e^{-\i \alpha}s)}{\cos\alpha}> 2\pi(m\!-\!1)\big\}$.
Therefore the sum of the two partial sums, which
is obtained from \eqref{eq:ThetaBsum} by omitting the $m$-th term, and therefore is equal to
\begin{equation}\label{eq:diffe}\Theta_\orbit(s)-\int_{b-\i\e^{-\i\alpha}\R_{\geq 0}}\!\!\! \e^{2\pi\i m \psi_\orbit(x)}\e^{-s\,t(x)}\d\psi_\orbit(x),\end{equation}
converges on the  $\alpha$-slanted strip $\big\{\frac{\im(\e^{-\i \alpha}s)}{\cos\alpha}\in\ ]2\pi(m\!-\!1),2\pi (m+1)[\,\big\}$. In particular, it is analytic at the point $s=2\pi\i m$.

Note that 
the integral
\[\int_{b-i\e^{-\i\alpha}\R_{\geq 0}}\!\!\! \e^{2\pi\i m \psi_\orbit(x)}\,\e^{-s\,t(x)}\d\psi_\orbit(x) \]
is defined  on the  half-plane $\big\{\frac{\im(\e^{-\i \alpha}s)}{\cos\alpha}<2\pi m\big\}$. However, it
can be extended analytically to the slit plane $\mathbb C\setminus\{2\pi\i m+\e^{i\alpha}\mathbb R_{\geq 0}\}$ by changing the line of integration to $b-\e^{-i\beta}\mathbb R_{\geq 0}$, \, $\beta\in(\alpha+\frac{\pi}{2},\alpha+\frac{3\pi}{2})$, as  in \eqref{eq:ThetaBsum}.

Hence, for $\omega=2\pi\i m$
\begin{equation*}\label{eq:hi}
\begin{aligned}
	\tfrac{1}{2\pi\i}\BV[\Theta_\orbit(s)]_{\omega+\e^{\i\alpha}\R_{\geq 0}}
	&=\tfrac{1}{2\pi\i}\BV\big[\!\int_{b+\e^{-\i\beta}\R_{\geq 0}}\!\!\!\!\! \e^{\omega\psi_\orbit(x)}\e^{-s\,t(x)}\d\psi_\orbit(x)\,\big]_{\omega+\e^{\i\alpha}\R_{\geq 0}}\\
	&=\BV\big[\widetilde\Borel_\alpha\{
	\e^{\omega\hat\psi_\orbit}\tfrac{\d\hat\psi_\orbit}{\d t}\}(s)\big]_{\omega+\e^{\i\alpha}\R_{\geq 0}}\\
	&=\Borel_\alpha\{
	\e^{\omega\hat\psi_\orbit}\tfrac{\d\hat\psi_\orbit}{\d t}\}(s).
\end{aligned}
\end{equation*}

\smallskip
Finally, formulas \eqref{eq:BVThetaFatou0} and \eqref{eq:BVThetaFatou} now follow from \eqref{eq:BVThetaomega} by standard properties of Borel and Laplace transforms.
\end{proof}

\begin{proposition}[Resurgence]\label{prop:resurgenceofTheta}
The function $\Theta_\orbit(s)$ extends  analytically  to $\widetilde{\C\sminus 2\pi\i\Z}$ and has at most exponential growth along any ray $c+\e^{\i\beta}\R_{\geq 0}$ for any point $c$ of the covering surface and any $\beta\notin \frac{\pi}{2}+\pi\Z$.
	
Furthermore, if  $\rho=0$, then $\Theta_\orbit(s)$ is a simple resurgent function on $\widetilde{\C\sminus 2\pi\i\Z}$, meaning that all its singularities
$\omega\in2\pi\i\Z$, when accessed from any sheet, have the form 
$(s\!-\!\omega)^{-1}\C\{(s\!-\!\omega)^{\frac{1}{k}}\}
+\log(s\!-\!\omega)\,\C\{s\!-\!\omega\}$. 
\end{proposition}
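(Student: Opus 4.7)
The strategy is to deduce the resurgence of $\Theta_\orbit$ from the resurgence of the formal Fatou coordinate $\hat\psi_\orbit$ (Theorem~\ref{thm:Ecalle}) via the boundary-value identities of Proposition~\ref{prop:Theta}. I would organize the proof around the jump formula \eqref{eq:BVThetaomega}, which explicitly expresses the monodromy of $\Theta_\orbit$ around each singularity $\omega \in 2\pi\i\Z$ in terms of Borel transforms that are already known to be resurgent.

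\textbf{Step 1 (extension to the universal cover).} Proposition~\ref{prop:Theta} already produces, for each fixed $\alpha \in \ ]\tfrac{\pi}{2},\tfrac{3\pi}{2}[$, an analytic continuation of $\Theta_\orbit$ to the slit plane $\C \setminus \bigcup_{\omega \in 2\pi\i\Z}(\omega + \e^{\i\alpha}\R_{\geq 0})$. Rotating $\alpha$ continuously rotates the family of slits in unison, and the resulting extensions agree on overlapping slit-plane regions by uniqueness of analytic continuation. One obtains a germ that can be continued along every path in $\C \setminus 2\pi\i\Z$, and in particular along any loop winding around points of $2\pi\i\Z$, yielding the analytic continuation to the universal cover $\widetilde{\C \setminus 2\pi\i\Z}$.

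\textbf{Step 2 (exponential growth and propagation of resurgence).} The jump formula \eqref{eq:BVThetaomega} tells us that crossing a cut $\omega + \e^{\i\alpha}\R_{\geq 0}$ changes $\Theta_\orbit$ by $2\pi\i\,\Borel_\alpha\bigl\{\e^{\omega \hat\psi_\orbit}\tfrac{\d\hat\psi_\orbit}{\d t}\bigr\}(s)$. By Theorem~\ref{thm:Ecalle}, $\Borel_\alpha\{\hat r\}$, and therefore $\Borel_\alpha\{\hat\psi_\orbit\}$, extends to $\widetilde{\C \setminus 2\pi\i\Z}$ with at most exponential growth along any non-vertical direction. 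Since resurgent functions form an algebra under convolution, are stable under $s$-derivation, and admit convolution exponentials in the appropriate functional class, the variation itself is resurgent with exponential growth. Iterating the jump formula, continuation across any finite chain of cuts adds only such resurgent summands, and the representation \eqref{eq:ThetaBsum} as a sum of major Borel transforms $2\pi\i\,\widetilde\Borel_\alpha\{\e^{\omega\psi_\orbit(x)}\tfrac{\d\psi_\orbit(x)}{\d t(x)}\}(s)$ provides the uniform exponential bound in any direction $\beta \notin \tfrac{\pi}{2} + \pi\Z$, the geometric decay in $\omega \in 2\pi\i\Z$ along such a ray ensuring convergence.

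\textbf{Step 3 (simple resurgence when $\rho = 0$).} For $\rho = 0$, the prenormalized formal Fatou coordinate takes the form $\hat\psi_\orbit = t + C' + \hat r$ with $\Borel_\alpha\{\hat r\}$ simple resurgent on $\widetilde{\C \setminus 2\pi\i\Z}$. The reduced jump formula \eqref{eq:BVThetaFatou0} then expresses the variation of $\Theta_\orbit(s)/s$ at $\omega$ as the $\omega$-shift of a constant times $\Borel_\alpha\{\e^{\omega\hat r}\}$, which is the convolution exponential of $\omega\,\Borel_\alpha\{\hat r\}$. The closure of simple resurgent functions under convolution exponential, a classical fact of Écalle's resurgence theory (see e.g.\ \cite{DudkoSauzin1,DudkoSauzin2} or the exposition in \cite{MitschiSauzin}), then yields the required singular structure $(s-\omega)^{-1}\C\{(s-\omega)^{\frac{1}{k}}\} + \log(s-\omega)\,\C\{s-\omega\}$ at every $\omega \in 2\pi\i\Z$ and on every sheet.

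The main technical obstacle lies in Step 3: rigorously controlling the iterated convolutions that define $\Borel_\alpha\{\e^{\omega\hat r}\}$ on the covering surface and showing that the simple resurgent form of singularities is preserved by the convolution exponential. This closure property is standard but non-trivial, and the cleanest implementation is to invoke it from the literature rather than redevelop it here. A subsidiary difficulty is bookkeeping the rotation of the family of cuts in Step 1 so that any path on $\widetilde{\C \setminus 2\pi\i\Z}$ is covered by a finite composition of the slit-plane representatives of Proposition~\ref{prop:Theta}, but this is purely a covering-space argument.
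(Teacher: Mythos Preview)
Your strategy is essentially the paper's: use the jump identity \eqref{eq:BVThetaomega} from Proposition~\ref{prop:Theta} to express the continuation of $\Theta_\orbit$ across each cut as the addition of $\pm 2\pi\i\,\Borel_\alpha\{\e^{\omega\hat\psi_\orbit}\tfrac{\d\hat\psi_\orbit}{\d t}\}$, then invoke Theorem~\ref{thm:Ecalle} and the stability of (simple) resurgent functions under composition with analytic germs (the paper cites \cite[Theorem~6.32]{MitschiSauzin}) to conclude. Your Step~2 iteration of the jump formula and your Step~3 appeal to closure under the convolution exponential are exactly what the paper does.

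One caution on Step~1: the rotation-of-$\alpha$ argument does not by itself produce the universal cover. The slit-plane branches $\Theta_\orbit^{(\alpha_1)}$ and $\Theta_\orbit^{(\alpha_2)}$ agree by uniqueness of continuation only on the connected component of $S_{\alpha_1}\cap S_{\alpha_2}$ containing $\{\Re s>0\}$; on the wedge components trapped between the two cuts issuing from the same $\omega$, they differ precisely by the monodromy, so ``the resulting extensions agree on overlapping slit-plane regions'' is false as stated. Since $\alpha$ is confined to $]\tfrac{\pi}{2},\tfrac{3\pi}{2}[$, rotating it cannot sweep the cut through a full turn, and you cannot follow a loop around $\omega$ while staying on a single slit-plane branch. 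The passage to higher sheets is therefore carried entirely by your Step~2 (iterated crossing of cuts via the jump formula), which is correct and matches the paper; Step~1 should be read simply as recalling the main-sheet extension already furnished by Proposition~\ref{prop:Theta}.
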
 

\begin{proof}
We call the \emph{main sheet} the domain $\C\setminus\bigcup_{\omega\in 2\pi\i\Z}(\omega+\R_{\leq 0})$ containing the right half-plane $\{\Re s>0\}$ on which $\Theta_\orbit(s)$ is initially defined. 
The effect of continuing $\Theta_\orbit(s)$ across one of the cuts $\omega+\R_{\leq 0}$ to another sheet is by Proposition~\ref{prop:Theta} the same as taking the value of $\Theta_\orbit(s)$ on the main sheet and adding to it (or subtracting from it, depending on the direction) $2\pi\i\,\Borel_\alpha\{\e^{\omega\psi_\orbit}\tfrac{\d\psi_\orbit}{\d t}\}(s)$.
By iterating this procedure one can express the value of $\Theta_\orbit(s)$ on any sheet as a sum of its value on the main sheet and of finitely many various analytic continuations of $\mp2\pi\i\,\Borel_{\alpha}\{\e^{\omega\psi_\orbit}\tfrac{\d\psi_\orbit}{\d t}\}$ over the  $\omega$'s encircled by the path along which $\Theta_\orbit$ is continued.
By Theorem~\ref{thm:Ecalle} we know that $\Borel_\alpha\{\psi_\orbit\}(s)$ is resurgent (resp. simple resurgent if $\rho=0$), and therefore so is each
\[2\pi\i\Borel_\alpha\{\e^{\omega\psi_\orbit}\tfrac{\d\psi_\orbit}{\d t}\}(s)=\begin{cases}
2\pi\i\,s\Borel_\alpha\{\psi_\orbit\}(s)&\omega=0,\\
\tfrac{2\pi\i\,s}{\omega}\Borel_\alpha\{\e^{\omega\psi_\orbit}\}(s),&\omega\in2\pi\i\Z\setminus\{0\},
\end{cases}\]
as it is a minor Borel transform of a composition of $\psi_\orbit$ into an analytic germ \cite[Theorem 6.32]{MitschiSauzin}.
\end{proof}

\subsection{Alternative approach to Theorems~\ref{thm:1} and~\ref{cor:1}} \label{sec:treci}\

In this section, we sketch alternative proofs of Theorems~\ref{thm:1} and~\ref{cor:1} in case $\rho=0$. 
In Proposition~\ref{prop:druga} we express the dynamic theta function $\Theta_\orbit$ as a convolution involving the Borel transform of the inverse Fatou coordinate of $\orbit$. In this way we are able to relate the coefficients of the leading singular terms of singularities of the theta function with  those of the Borel transform of the inverse Fatou coordinate, which are known to correspond to the Birkhoff--\'Ecalle--Voronin invariants. This approach is motivated by the similar approach in \cite{Candelpergher, Sauz06, MitschiSauzin}.

\smallskip

Let $\psi_\orbit(x)=t(x)+\rho\log x+C+O(x)$ be the sectorial Fatou coordinate, such that $\psi_\orbit(x_0)=0$, and let
\begin{equation}\label{eq:u} 
u(x):=t\circ\psi_\orbit^{\circ(-1)}\!\circ t(x)-t(x)=-\rho\log x-C+O(x,\rho x^k\log x).
\end{equation}
Let $\alpha\in\ ]\,\underline{\theta},\overline{\theta}\,[$ be a corresponding direction \eqref{eq:thetaalpha} of Borel summation on the petal $W_\orbit$, $\psi_\orbit=\BSum_\alpha\{\hat\psi_\orbit\}$.
For $\sigma\in\mathbb C$ we define:
\begin{equation}\label{eq:Q}
\begin{aligned}
&\tilde Q(s,\sigma):=\widetilde\Borel_{\alpha}\{\e^{-\sigma u(x)}\}(s),\\
&Q(s,\sigma):=\Borel_{\alpha}\{\e^{-\sigma u(x)}\}(s)=\BV[\tilde Q(s,\sigma)]_{s\in \e^{\i\alpha}\R_{\geq 0}}.
\end{aligned}
\end{equation}
The function $\tilde Q(s,\sigma)$ is defined and analytic on
$(s,\sigma)\in (\C\sminus \e^{\i\alpha}\R_{\geq 0})\times\C$ and extends analytically to $\widetilde{(\C\sminus 2\pi\i\Z)}\times\C$, with at most exponential growth in $s$ along any non-vertical ray in $\widetilde{\C\sminus 2\pi\i\Z}$. These properties of $\tilde Q(s,\sigma)$ follow from knowing that $\widetilde\Borel_\alpha\{u(x)\}(s)$ has those properties \cite{Ecalle1}, \cite[Theorem 7.6]{MitschiSauzin}, 
and that they are preserved under composition with the entire function $u\mapsto \e^{-\sigma u}$, $\sigma\in\mathbb C$ \cite[Theorem 6.32]{MitschiSauzin}.

\begin{proposition}\label{prop:druga}
 For $\alpha\in\ ]\,\underline{\theta},\overline{\theta}\,[$ \eqref{eq:thetaalpha},
and $s\in\C\sminus\bigcup_{\omega\in 2\pi\i \Z}(\omega+\e^{\i\alpha}\R_{\geq 0})$,
\begin{equation}\label{eq:ThetaQ}
	\Theta_\orbit(s)=\int_{\Circ(\e^{\i\alpha}\R_{\geq 0})} \frac{\tilde Q(\xi,s)}{1-\e^{\,\xi-s}}\d\xi
	=\int_{\e^{\i\alpha}\R_{\geq 0}} \frac{Q(\xi,s)}{1-\e^{\,\xi-s}}\d\xi,
\end{equation}	
where the functions $Q(s,\sigma)$ and $\tilde Q(s,\sigma)$ are defined in \eqref{eq:Q}.
\end{proposition}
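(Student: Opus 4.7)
\textbf{Proof plan for Proposition~\ref{prop:druga}.}

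The plan is to expand $\Theta_\orbit(s) = \sum_{n \geq 0} \e^{-s\,t(x_n)}$, recognize each term as a Laplace integral of an $n$-independent kernel, exchange sum and integral, and sum the resulting geometric series to produce the kernel $(1 - \e^{\xi - s})^{-1}$. The starting observation is that since $\psi_\orbit(x_n) = n$, one has $x_n = \psi_\orbit^{\circ(-1)}(n)$; writing $U(y) := t \circ \psi_\orbit^{\circ(-1)}(y) - y$ so that $u(x) = U(t(x))$ by \eqref{eq:u}, this gives the key identity $t(x_n) = n + U(n)$, hence
\[
\Theta_\orbit(s) = \sum_{n = 0}^{+\infty} \e^{-sn}\,\e^{-s\,U(n)}.
\]

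By definition of $Q$ and the Laplace inversion formula, $\e^{-\sigma U(y)} = \int_{\e^{\i\alpha}\R_{\geq 0}} Q(\xi,\sigma)\,\e^{\xi y}\,\d\xi$; specializing $\sigma = s$, $y = n$ and substituting yields
\[
\Theta_\orbit(s) = \sum_{n = 0}^{+\infty} \int_{\e^{\i\alpha}\R_{\geq 0}} Q(\xi,s)\,\e^{(\xi - s)n}\,\d\xi.
\]
For $\re(s) > 0$ and $\alpha \in\ ]\underline{\theta},\overline{\theta}[$ one has $\re(\xi - s) \leq -\re(s) < 0$ along the integration ray (since $\cos\alpha < 0$), which together with the at most exponential growth of $Q(\xi,s)$ along the ray (inherited from that of $\tilde Q$) permits a Fubini interchange. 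Summing the geometric series $\sum_{n \geq 0} \e^{(\xi - s)n} = (1 - \e^{\xi - s})^{-1}$ yields the second equality in \eqref{eq:ThetaQ}. The first equality follows from $Q = \BV[\tilde Q]_{\e^{\i\alpha}\R_{\geq 0}}$: for $s$ in the slit plane, the poles $\xi = s + 2\pi\i m$ of $(1 - \e^{\xi - s})^{-1}$ lie off the ray, so the kernel is holomorphic on a tubular neighborhood of $\e^{\i\alpha}\R_{\geq 0}$, and one may replace integration along the ray against $Q$ by integration along the Hankel contour against $\tilde Q$.

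The identity then extends by analytic continuation from $\{\re(s) > 0\}$ to the whole slit plane: the right-hand side is analytic there after locally deforming the Hankel contour to avoid the moving poles $s + 2\pi\i m$, while the left-hand side is analytic there by Proposition~\ref{prop:Theta}. The main obstacle is the Fubini step and the behaviour of $Q(\xi,s)$ near $\xi = 0$: since $u(x) = -\rho\log x - C + O(x,\rho x^k \log x)$, one has $\e^{-s u(x)} \sim \e^{sC} x^{s\rho}$, so $Q(\xi,s)$ carries a power-type singularity $\xi^{s\rho/k - 1}$ at the origin (cf.~\eqref{eq:Borelxn}). This is integrable in $\xi$, but uniform-in-$n$ integrability of $Q(\xi,s)\,\e^{(\xi - s)n}$ near $\xi = 0$ must still be verified. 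Working from the outset with $\tilde Q$ and the Hankel contour, which bypasses the ray entirely, is the cleanest way to circumvent this issue and also immediately validates the contour deformations used in the analytic continuation step.
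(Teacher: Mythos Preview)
Your approach is correct in its main thrust and takes a genuinely different, more direct route than the paper. The paper starts from the contour-integral representation \eqref{eq:ThetaC} of $\Theta_\orbit$ via $B_m(t)=\frac{\e^{2\pi\i m t}}{\e^{2\pi\i t}-1}$, performs the change of integration variable $\psi_\orbit\leadsto t$ to bring in $u(x)$, splits the line into two rays, and then applies a two-sided Fourier--Borel convolution identity to the product $(B_m\circ t)\cdot\e^{-\sigma u}$: one factor produces $\tilde Q(\xi,\sigma)$, the other the kernel $(1-\e^{\xi-s})^{-1}$; taking the difference of the two half-line contributions turns the line integral into the Hankel integral \eqref{eq:ThetaQ}. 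You instead keep the defining discrete sum, use the key identity $t(x_n)=n+U(n)$, represent each factor $\e^{-sU(n)}$ as the Laplace integral of $Q(\cdot,s)$, and sum the resulting geometric series. Your route makes the emergence of the kernel $(1-\e^{\xi-s})^{-1}$ completely transparent and avoids the machinery of \eqref{eq:ThetaC}; the paper's route packages the sum--integral interchange as a standard convolution theorem and gets the analytic continuation to each strip for free from the parameter $m$.

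One point does need sharpening. Your Fubini justification is incomplete: the bound $\re(\xi-s)\leq -\re(s)$ controls the $n$-sum for each fixed $\xi$, but the $\xi$-integral for a single small $n$ need not converge, because $Q(\cdot,s)$ generically has strictly positive exponential type $A>0$ along $\e^{\i\alpha}\R_{\geq 0}$ (it is the Borel transform of a convergent but non-entire germ in $x$). In particular the Laplace identity $\e^{-sU(n)}=\int Q(\xi,s)\,\e^{\xi n}\,\d\xi$ is only valid once $n|\cos\alpha|>A$. Passing to the Hankel contour, as you suggest, cures the singularity at $\xi=0$ but does not address this growth at infinity. The simplest repair is to sum only from some $n_0$ with $n_0|\cos\alpha|>A$: then Fubini is legitimate and yields the identity with the numerator replaced by $\e^{(\xi-s)n_0}$, plus the explicit finite remainder $\sum_{n<n_0}\e^{-s\,t(x_n)}$; this version already suffices for all the applications in \S\ref{sec:treci}, and the form \eqref{eq:ThetaQ} can then be recovered by treating $n_0$ as a parameter and analytically continuing in the $\xi$-direction of integration.
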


\begin{proof}
Using expression \eqref{eq:ThetaC}, and changing the variable of the integration, we get:
\begin{equation}\label{eq:ThetaD}
\begin{aligned}
	\Theta_\orbit(s)&=\int_{b+\i \e^{-\i\alpha}\R}\!(B_m\!\circ\psi_\orbit(x))\,\e^{-s\,t(x)}\d\psi_\orbit(x)\\
	&=\int_{b+\i \e^{-\i\alpha}\R}\!(B_m\!\circ t(x))\,\e^{-s\,t\circ\psi_\orbit^{\circ(-1)}\!\circ t(x)}\d t(x)\\
	&=\int_{b+\i \e^{-\i\alpha}\R}\!(B_m\!\circ t(x))\,\e^{-su(x)}\e^{-s\,t(x)}\d t(x),
\end{aligned}
\end{equation}
for $s$ in the strip
\begin{equation}\label{eq:strip}
\Big\{\tfrac{\im(\e^{-\i \alpha}s)}{\cos\alpha}\in\ ]2\pi(m\!-\!1),\, 2\pi m[\,\Big\}\end{equation} 
between the points $2\pi\i m$ and $2\pi\i(m-1)$, parallel to $\e^{\i\alpha}\R$ (see the discussion after \eqref{eq:ThetaC}).

We want to rewrite the integral \eqref{eq:ThetaD} as a convolution of two Borel transforms. In order to assure convergence, we shall divide the integration line $b+\i\e^{-\i\alpha}\R$ into two rays 
$b+\i\e^{-\i\alpha}\R_{\geq 0}$ and $b-\i \e^{-\i\alpha}\R_{\geq 0}$. 
We denote
\[\tilde\Phi_\pm(s,\sigma):=\int_{b\mp\i \e^{-\i\alpha}\R_{\geq 0}}\!\!\!(B_m\!\circ t(x))\,\e^{-\sigma u(x)}\e^{-s\,t(x)}\d t(x),\]
and $\Phi(s,\sigma):=\tilde\Phi_-(s,\sigma)-\tilde\Phi_+(s,\sigma)=\int_{b+\i\e^{-\i\alpha}\R}(B_m\circ t)\,\e^{-\sigma u}\e^{-st}\d t$, so that $\Theta_\orbit(s)=\Phi(s,s)$.
We rewrite each $\tilde\Phi_\pm(s,\sigma)$ as an integral over the whole line $b+\i\e^{-\i\alpha}\R$
\[\tilde\Phi_\pm(s,\sigma)=\int_{b+\i\e^{-\i\alpha}\R}\!
g_\pm(x,\sigma)\cdot h(x)\,\e^{-s\,t(x)}\d t(x)=
2\pi\i\,\Borel_\alpha\big\{g_\pm\cdot h\big\}(s),
\]
where
$g_\pm(x,\sigma):= (\mathbbm{1}_{b\mp\i\e^{-\i\alpha}\R_{\geq 0} } \circ t(x))\cdot\e^{-\sigma u(x)}$ and $h(x):=B_m\!\circ t(x)$.
Now we apply the standard convolution identity
for two-sided Borel--Laplace--Fourier type transforms of a product of two functions on the line $t(x)\in b+\i\e^{-\i\alpha}\R$: 
\[\tilde \Phi_\pm(s,\sigma)=2\pi\i\,\Borel_\alpha\big\{g_\pm \cdot h\big\}(s)
=2\pi\i\int_{\i\e^{\i\alpha}(0\pms)+\e^{\i\alpha}\R}\!\!\!\!
\Borel_\alpha\big\{g_\pm\big\}(\xi)\cdot\Borel_\alpha\big\{h\big\}(s-\xi)\d\xi,\]
for $s$ in the strip \eqref{eq:strip}.
Here,
\begin{align*}
2\pi\i\,\Borel_\alpha\{h\}(s)&=2\pi\i\,\Borel_\alpha\{B_m\!\circ t(x)\}(s)=
\int_{b+\i\e^{-\i\alpha}\R} \tfrac{\e^{2\pi\i m\,t(x)}}{\e^{2\pi\i\, t(x)}-1}\e^{-s\,t(x)}\d t(x) =\mfrac{1}{1-\e^{-s}}
\end{align*}
by the residue theorem, on the strip \eqref{eq:strip}, and
\[\Borel_\alpha\{g_\pm\}(s)= \tfrac{1}{2\pi\i}\int_{b\mp\i\e^{-\i\alpha}\R_{\geq 0}}\!\!\!\! \e^{-\sigma u(x)}\e^{-s\,t(x)}\d t(x) = \tilde Q(s,\sigma), \]
on the half-plane 
$\big\{\Re(\e^{-\i\beta_\pm}s)>0\big\}$ bounded by the line $\e^{\i\alpha}\R$,
where $\beta_+=\alpha+\frac\pi2$, $\beta_-=\alpha+\frac{3\pi}{2}$. 
We have
\[\Borel_\alpha\{g_-\}(s)-\Borel_\alpha\{g_+\}(s)=\BV[\tilde Q(s,\sigma)]_{\e^{\i\alpha}\R}=
\begin{cases}
\BV[\tilde Q(s,\sigma)]_{\e^{\i\alpha}\R_{\geq0}},\!\!\! & s\in\e^{\i\alpha}\R_{\geq 0},\\0,&s\in\e^{\i\alpha}\R_{<0},
\end{cases}
\]
in distributional sense, hence, on the strip \eqref{eq:strip}:
\begin{equation}\label{eq:PHI}
\Phi(s,\sigma)=\tilde\Phi_-(s,\sigma)-\tilde\Phi_+(s,\sigma)=\int_{\Circ(\e^{\i\alpha}\R_{\geq 0})} \frac{\tilde Q(\xi,\sigma)}{1-\e^{\,\xi-s}}\d\xi.
\end{equation}
Since  the identity \eqref{eq:PHI} is independent of the integer $m\in\Z$ determining the strip \eqref{eq:strip}, it is true on the whole slit plane $s\in\C\sminus\bigcup_{\omega\in 2\pi\i \Z}(\omega+\e^{\i\alpha}\R_{\geq 0})$, and for all $\sigma\in\C$.
The identity \eqref{eq:ThetaQ} now follows by restricting to $\{\sigma=s\}$. 
\end{proof}

In particular, if the residual invariant $\rho$ \eqref{eq:resit} is zero, then 
$u(x)=-C+O(x)$ \eqref{eq:u} and we write:
\begin{equation}\label{eq:QBu}
Q(s,\sigma)=\e^{\sigma C}\delta_0(s)+ \Borel_{\alpha}\{\e^{-\sigma u(x)}\!-\e^{\sigma C}\}(s), \qquad \sigma\in\mathbb C,  
\end{equation}
where $\Borel_{\alpha}\{\e^{-\sigma u(x)}\!-\e^{\sigma C}\}(s)$ is in this case not just a distribution, but an actual analytic germ, which extends to a simple resurgent function on $\widetilde{\mathbb C\setminus 2\pi\i \mathbb Z}$. Indeed, it is a convolution exponential of a simple resurgent function $\Borel_{\alpha}(u(x))(s)$, see \cite[Proposition 3, \S 1]{MitschiSauzin} for details.
We now have $\Theta_\orbit(s)=\Phi(s,s)$, for 
\begin{equation}\label{eq:I}
\Phi(s,\sigma)=\int_{\e^{\i\alpha}\R_{\geq 0}}\! \frac{Q(\xi,\sigma)}{1-\e^{\,\xi-s}}\d\xi=
\frac{\e^{\sigma C}}{1-\e^{-s}}+\int_{\e^{\i\alpha}\R_{\geq 0}}\!\!\!\! \frac{\Borel_{\alpha}\{\e^{-\sigma u(x)}\!-\e^{\sigma C}\}(\xi)}{1-\e^{\,\xi-s}}\d\xi.
\end{equation}

\smallskip

\begin{proof}[Alternative proof of Propositions~\ref{prop:Theta} and ~\ref{prop:resurgenceofTheta}]
For simplicity let us assume that $\rho=0$ (the general case would be similar except for having to work with $\tilde Q(s,\sigma)$ \eqref{eq:Q} rather than $Q(s,\sigma)$ \eqref{eq:QBu}).

By \cite[Proposition 9b1]{Ecalle2}, \cite[Theorem 7.6]{MitschiSauzin},  $\Borel_\alpha\{u(x)\}(s)$ is simple resurgent when $\rho=0$, and therefore by \cite[Theorem 6.32]{MitschiSauzin} so is the function $\Borel_{\alpha}\{\e^{-\sigma u(x)}\!-\e^{\sigma C}\}(s)$ for all $\sigma\in\C$.

It is sufficient to verify that the singularities of $\Phi(s,\sigma)$ \eqref{eq:I} on the main sheet are of simple resurgent type. Indeed, by the residue theorem applied to \eqref{eq:I},
\begin{equation}\label{eq:ihi}
\BV[\Phi(s,\sigma)]_{\omega+\e^{\i\alpha}\R_{\geq 0}}=
2\pi\i\,\e^{\sigma C}\delta_\omega(s) + 
2\pi\i\,\Borel_\alpha\{\e^{-\sigma u(x)}-\e^{\sigma C}\}(s-\omega),
\end{equation}
where the second term belongs to $(s-\omega)^{\frac{1-k}{k}}\mathbb C\{(s-\omega)^{\frac1k}\}$ for any $\sigma\in\C$.
Therefore, the function $\Theta_\orbit(s)$ has locally at $\omega\in 2\pi\i\mathbb Z$ simple resurgent singularities. 

By \eqref{eq:ihi}, the analytic continuation of $\Phi(s,\sigma)$ from the main sheet $s\in\C\sminus\bigcup_{\omega\in 2\pi\i \Z}(\omega+\e^{\i\alpha}\R_{\geq 0})$ 
across one of the cuts $\omega+\e^{\i\alpha}\R_{\geq 0}$ differs from the corresponding value of $\Phi(s,\sigma)$ on the main sheet by  $\pm2\pi\i\,\Borel_\alpha\{\e^{-\sigma u(x)}-\e^{\sigma C}\}(s-\omega)$.
Iterating this procedure one can express the value of $\tilde\Phi(s,\sigma)$ on any sheet in terms of its value on the main sheet and a sum of finitely many analytic
continuations of the simple resurgent functions $\mp2\pi\i\,\Borel_{\alpha}\{\e^{-\sigma u(x)}\!-\e^{\sigma C}\}(s-\omega)$ over the encircled $\omega$'s. This proves Proposition~\ref{prop:resurgenceofTheta}.

The identity \eqref{eq:ihi} applied to $\BV[\Theta_\orbit(s)]_{\omega+\e^{\i\alpha}\R_{\geq 0}}=\BV[\Phi(s,s)]_{ \omega+\e^{\i\alpha}\R_{\geq 0}}$ can be also rewritten as:
\begin{align*}
 \BV[\Theta_\orbit(s)]_{\omega+\e^{\i\alpha}\R_{\geq 0}}&=
2\pi\i\,\Borel_\alpha\{\e^{-su(x)}\}(s-\omega)=
\int_{b+\i\e^{-\i\alpha}\R}\!\!\!\e^{-s\,u(x)}\e^{-(s-\omega)\,t(x)}\d t(x)\\
&=\int_{b+\i\e^{-\i\alpha}\R}\!\!\!\e^{\omega\psi_\orbit(x)}\e^{-s\,t(x)}\d \psi_\orbit(x)
=2\pi\i\,\Borel_\alpha\{\e^{\omega\psi_\orbit(x)}\tfrac{\d\psi_\orbit(x)}{\d t(x)}\}(s),   
\end{align*}
by applying an opposite change of coordinate of \eqref{eq:ThetaD}, 
which reproves Proposition~\ref{prop:Theta}.
\end{proof}

\let\oldbibitem\bibitem \renewcommand{\bibitem}[2][]{\oldbibitem{#2}} 

\footnotesize

\end{document}